\newcommand{\beq}{\begin{equation}}
\newcommand{\eeq}{\end{equation}}
\newcommand{\beqa}{\begin{eqnarray}}
\newcommand{\eeqa}{\end{eqnarray}}
\newcommand{\beaa}{\begin{eqnarray*}}
\newcommand{\ben}{\begin{eqnarray*}}
\newcommand{\eaa}{\end{eqnarray*}}
\newcommand{\een}{\end{eqnarray*}}
\newcommand \nc {\newcommand}
\newtheorem{theorem}{Theorem}[section]
\newtheorem{lemma}[theorem]{Lemma}
\newtheorem{corollary}[theorem]{Corollary}
\newtheorem{definition}[theorem]{Definition}
\nc \thref{Theorem \ref}
\nc \leref{Lemma \ref}
\nc \prref{Proposition \ref}
\nc \coref{Corollary \ref}
\nc \deref{Definition \ref}
\nc \exref{Example \ref}
\nc \reref{Remark \ref}
\newcommand{\A}{\mathcal{A}}
\newcommand{\B}{\mathcal{B}}
\newcommand{\C}{\mathbb{C}}
\newcommand{\D}{\mathcal{D}}
\renewcommand{\H}{\mathcal{H}}
\renewcommand{\L}{\mathcal{L}}
\newcommand{\M}{\mathcal{M}}
\renewcommand{\O}{\mathcal{O}}
\newcommand{\T}{\mathcal{T}}
\newcommand{\Z}{\mathbb{Z}}
\newcommand{\f}{\mathbf{f}}
\newcommand{\q}{\mathbf{q}}
\renewcommand{\t}{\mathbf{t}}
\def\dim{\mathop{\rm dim}\nolimits}
\def\d{\partial}
\def\({\left(}
\def\){\right)}
\def\[{\left[}
\def\]{\right]}
\def\<{\left\langle}
\def\>{\right\rangle}
\def\one{{\bf 1}}
\def\D{{\mathcal D}}
\def\la{\lambda}
\def\al{\alpha}
\def\de{\delta}
\def\be{\beta}
\def\Si{\Sigma}
\newcommand{\leftexp}[2]{{\vphantom{#2}}^{#1}{#2}}
\title[Eynard--Orantin recursion for the ancestors]
{The Eynard--Orantin recursion for the total ancestor potential}
\author{Todor Milanov}
\address{Kavli IPMU (WPI) \\ The University of Tokyo \\ Kashiwa \\ Chiba 277-8583 \\ Japan}
\email{todor.milanov@ipmu.jp}
\begin{document}

\begin{abstract}
It was proved recently that the correlation functions of a semi-simple 
cohomological field theory satisfy the so called Eynard--Orantin
topological recursion. We prove that in the settings of singularity
theory, the relations can be expressed in terms of periods integrals
and the so called phase forms. In particular, we prove that the
Eynard-Orantin recursion is equivalent to
$N$ copies of Virasoro constraints for the ancestor potential,
which follow easily from the definition of the potential. 
\end{abstract}
\maketitle
\tableofcontents
\addtocontents{toc}{\protect\setcounter{tocdepth}{1}}

\section{Introduction}

The Eynard--Orantin recursion (see \cite{EO}) was discovered first for the correlation functions of certain matrix integrals.
However, its applications go beyond the theory of matrix models. The recursion is turning into a powerful tool
for computing the correlation functions in various quantum field
theories. In particular, it provides  an efficient algorithm for
computing quite complicated invariants such as Gromov--Witten
invariants and certain polynomial invariants in knot theory. 

In order to set up the recursion one needs an analytic curve, called {\em
  spectral curve}, two holomorphic functions on it, and a certain
symmetric 2-form satisfying some additional properties. At first, one
might think that this is a serious restriction, so the applications
would be only limited. The surprising fact however, is that the
initial data can be determined from the
1-point and the 2-point correlation functions only (of a given quantum
field theory) (see \cite{DMSS}). This feature makes the recursion
quite universal. In particular, this observation was exploited in the
paper \cite{BOSS}, where the authors prove that the ancestor
Gromov--Witten (GW for shortness) invariants of manifolds with a
semi-simple quantum cohomology can be computed via the Eynard--Orantin
recursion. Although our work appears after \cite{BOSS}, the main observation namely, that one should study the
$n$-point series \eqref{n-point-series} and that they should satisfy the 
Eynard--Orantin recursion with kernel given by formulas
\eqref{kernel-1} and \eqref{kernel-2}  was done independently.

The goal in this paper is to interpret the Eynard--Orantin recursion
in terms of differential operator constraints for the total ancestor
potential. In particular, this allows us to obtain a simple proof of
the recursion relation. In particular, we prove that the
correlation functions can be expressed in terms of period integrals
and phase forms, which suggests that they should be compared to the
correlation functions of the twisted Vertex algebra representation
introduced in \cite{BM}.

\subsection{Preliminary notation}\label{sec:notation}
Let $f\in \O_{\C^{2l+1},0}$ be the germ of a holomorphic function with an
isolated critical point at $0$. We fix a miniversal deformation
$F(t,x),$ $t\in B$ and a primitive
form $\omega$ in the sense of K. Saito \cite{S1, MS}, so that $B$ inherits a Frobenius structure (see \cite{He,SaT}). In particular we have the following identifications (c.f. Section \ref{sec:frobenius}):
\ben
T^*B\cong TB\cong B\times T_0B\cong B\times H,
\een
where $H$ is the Jacobi algebra of $f$, the first isomorphism is given by the residue pairing, the
second by the flat residue metric and the last one is the
Kodaira--Spencer isomorphism 
\beq\label{KS}
T_0B \cong H,\quad \d/\d t_i\mapsto \left. \d_{t_i}F\right|_{t=0} \ {\rm mod}\ 
 ( f_{x_0},\dots,f_{x_{2l}}).
\eeq
We need also the period integrals
\beq\label{period}
I^{(k)}_\alpha(t,\lambda) = - d^B\ (2\pi)^{-l}\, \d_\la^{k+l} \
\int_{\alpha_{t,\lambda}} d^{-1}\omega\ \in T_t^*B\cong H,
\eeq
where $\alpha$ is a cycle from the vanishing cohomology, $d^B$ is the de Rham differential on $B$, and $d^{-1}\omega$ is any $(n-1)$-form $\eta$
such that $d\eta=\omega$. The periods are multivalued analytic
functions on $B\times \mathbb{P}^1$  with poles along the so called {\em discriminant locus} (c.f. Section \ref{sec:periods}). 
We make use of the following series
\ben
\f^\alpha(t,\la;z) = \sum_{k\in \Z} \ I^{(k)}_\alpha(t,\la)\,
(-z)^k,\quad
\phi^\alpha(t,\lambda;z) = \sum_{k\in \Z} \
I^{(k+1)}_\alpha(t,\la)(-z)^k\, d\lambda.
\een
Note that $\phi^\alpha(t,\lambda;z) =d^{\mathbb{P}^1} \f^\alpha(t,\la;z)$.

Let $B_{ss}\subset B$ be the subset of semi-simple points, i.e.,
points $t\in B$ such that the critical values of $F(t,\cdot)$ form a
coordinate system in a neighborhood of $t$.  For every $t\in B_{ss}$
Givental's higher-genus reconstruction formalism gives rise to
ancestor correlation functions of the following form
\beq\label{correlators}
\langle v_1 \psi_1^{k_1},\dots,v_n\psi^{k_n}\rangle_{g,n}(t),\quad
v_i\in H,\quad k_i\in \Z_+ (1\leq i\leq n).
\eeq
Apriory, each correlator depends analytically on $t\in B_{ss}$, but it might have poles along the divisor $B\setminus{B_{ss} }$. 
Given $n$ vanishing cycles $\alpha_1,\dots,\alpha_n$ and a generic
point $t\in B$ we define the following $n$-point symmetric forms
\beq\label{n-point-series}
\omega_{g,n}^{\alpha_1,\dots,\alpha_n}(t;\la_1,\dots,\la_n) = 
{\Big\langle} \phi^{\alpha_1}_+(t,\la_1,\psi_1),\dots,  \phi^{\alpha_n}_+(t,\la_n,\psi_n)\Big\rangle_{g,n}(t),
\eeq
where the $+$ means truncation of the terms in the series with
negative powers of $z$. The functions \eqref{n-point-series} will be
called {\em $n$-point series} of genus $g$. They should be
interpreted formally via their Laurent series expansions at the
singular points. They are the main object of our study and we expect that
they have many remarkable properties yet to be discovered. Probably
the first question to be addressed is whether the $n$-point functions are
global objects, i.e., multivalued analytic functions with finite order
poles on the {\em configuration space} $\mathcal{C}_n(\mathbb{P}^1) =
\mathcal{F}_n(\mathbb{P}^1) /\mathfrak{S}_n$, where  
\ben
\mathcal{F}_n(\mathbb{P}^1) = \Big\{ (\lambda_1,\dots,\lambda_n)\in
(\mathbb{P}^1)^n\ :\ \lambda_i\neq \la_j \mbox{ for }i\neq j\Big\}
\een
and $\mathfrak{S}_n$ is the symmetric group acting by permutation of
the coordinates. 

\subsection{Statement of the results}
Let us assume that $t\in B_{ss}$ is generic so that the function
$F(t,\cdot)$ has $N:={\rm dim}_\C \, H$ pairwise different critical
values $u_j(t).$ Let $\be_j$ be a cycle vanishing over $\la=u_j$. We
introduce the following quadratic differential operator
\beq\label{vir-j}
Y_{t,\la}^{u_j} = :(\d_\lambda\,\widehat{\f^{\beta_j}}(t,\lambda))^2: \ +\  P^{\beta_j,\beta_j}_0(t,\lambda),
\eeq
where $:\  :$ is the normal ordering, the differential operator
(c.f. Section \ref{ssympl})
\ben
\d_\lambda\,\widehat{\f^{\beta_j}}(t,\lambda)=\d_\lambda\,\widehat{\f_+^{\beta_j}}(t,\lambda)+\d_\lambda\,\widehat{\f_-^{\beta_j}}(t,\lambda)
\een
is defined by
\begin{align}\label{quant1}
\d_\la\widehat{\f^{\beta_j}}(t,\la)_+ &= \sum_{k=0}^\infty \sum_{i=1}^N \, (-1)^{k+1} \, (I_{\beta_j}^{(k+1)}(t,\la), v^i) \,
\hbar^{1/2} \, \frac{\d}{\d q_k^i} \,,
\\ \label{quant2}
\d_\la\widehat{\f^{\beta_j}}(t,\la)_- &= \sum_{k=0}^\infty \sum_{i=1}^N \, (I_{\beta_j}^{(-k)}(t,\la), v_i) \,
\hbar^{-1/2} \, q_k^i  \,,
\end{align}
where $\{v^i\}$ and $\{v_i\}$ are dual bases for $H$ with respect to
the residue pairing $(,)$. Finally, $P_0^{\beta_j,\beta_j}$ is the
free term  in the Laurent series expansion of the {\em propagator}
\ben
[\d_\la\widehat{\f_+^{\beta_j}}(t,\mu),\d_\la\widehat{\f_-^{\beta_j}}(t,\la)]=2\,(\mu-\la)^{-2}
+ \sum_{k=0}^\infty P_k^{\beta_j,\beta_j}(t,\la)\, (\mu-\la)^k.
\een
The definition \eqref{vir-j} is very natural from the point of view of vertex
algebras (see \cite{BM}). It is the field
that determines a representation of the Virasoro vertex operator algebra
with central charge 1 on the {\em Fock space} 
\ben
\C_\hbar[[q_0,q_1+{\bf 1},q_2,\dots]],\quad q_k=(q_k^1,\dots,q_k^N),
\quad \C_\hbar:=\C(\!(\sqrt{\hbar})\!),
\een
where $\one$ is the unity of the local algebra $H$. We may assume that
$v_1=\one$.

Let us denote by $\A_t(\hbar;\q)$ the {\em total ancestor potential}
of the singularity. By definition, it is a vector in the Fock space of
the form (c.f. Section \ref{asop})
\ben
\exp\Big(\sum_{g,n=0}^\infty \frac{1}{n!}\,{\Big\langle}
\t(\psi_1),\dots,\t(\psi_n){\Big\rangle}_{g,n}(t)\ \hbar^{g-1}\Big),
\een  
where $\t(\psi)=\sum_{k,i} t_k^i \, v_i\psi^k$ and the relation
between the set of formal variables $\{q_k^i\}$ and $\{t_k^i\}$ is
given by the {\em dilaton shift}
\beq\label{dilaton-shift}
t_k^i=
\begin{cases}
q_k^i & \mbox{ if } (k,i)\neq (1,1),\\
q_1^1+1 & \mbox{ otherwise}.
\end{cases}
\eeq 
We define the following set of differential operators 
\beq\label{eo-dop}
L_{m-1,i}=\frac{1}{4}\,\sum_{j=1}^N\, {\rm Res}_{\la=u_j}\, \frac{
  (I^{(-m-1)}_{\beta_j}(t,\la),v_i) \, Y_{t,\la}^{u_j}
}{(I^{(-1)}_{\beta_j}(t,\la),\one) }\, d\la,\quad m\geq 0,\quad 1\leq
i\leq N.
\eeq
Note that although the periods are multi-valued analytic
functions, the above expression is single valued with respect to the
local monodromy around $\la=u_j$, so the residue is well defined. 
Our first result is the following .
\begin{theorem}\label{t1}
The total ancestor potential satisfies the following constraints:
\ben
L_{m-1,i}\,\A_t(\hbar;\q) = 0,\quad m\geq 0,\quad 1\leq i\leq N.
\een
\end{theorem}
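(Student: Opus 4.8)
The plan is to prove the constraints by transporting the Virasoro constraints for the Kontsevich--Witten tau function through Givental's factorization of the ancestor potential at a semi-simple point. First I would invoke Givental's formula expressing $\A_t$ as $\widehat{\Psi}\,\widehat{R}$ applied to a product $\prod_{j=1}^N\tau_j$ of Kontsevich--Witten tau functions attached to the $N$ critical points of $F(t,\cdot)$, where $\widehat{\Psi}\,\widehat{R}$ is the quantization of the symplectic transformation relating the flat frame to the normalized canonical (Morse) frame near the critical values $u_j(t)$. Each factor $\tau_j$ is annihilated by the standard one-variable Virasoro operators $\ell_n^{(j)}$, $n\geq -1$, acting on its own (disjoint) set of canonical-frame variables; consequently every $\ell_n^{(j)}$ with $n\geq -1$ annihilates the product $\prod_j\tau_j$. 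The whole problem then reduces to identifying the operators $L_{m-1,i}$ with $\widehat{\Psi}\,\widehat{R}$-conjugates of $\C$-linear combinations of these local Virasoro operators. I note that the index shift in $L_{m-1,i}$, $m\geq 0$, matches exactly the Virasoro range $m-1\geq -1$, which is a first consistency check.

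The key step is a local analysis of the period integrals near each critical value. Since $t\in B_{ss}$ is generic, $F(t,\cdot)$ is Morse near the critical point with value $u_j$, so the vanishing cycle $\beta_j$ has the explicit $A_1$-asymptotics: up to the residue pairing, $I^{(k)}_{\beta_j}(t,\la)$ is a known series in a local coordinate $\sqrt{\la-u_j}$ whose principal (polar) part reproduces the free bosonic field underlying $\tau_j$ and whose regular part is precisely the Taylor expansion governed by the calibration $R$. Substituting these asymptotics into \eqref{quant1}--\eqref{quant2} identifies $\d_\la\widehat{\f^{\beta_j}}$ with the quantized free boson of the $j$-th factor, conjugated by $\widehat{\Psi}\,\widehat{R}$. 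Consequently the normally ordered square together with the propagator correction in \eqref{vir-j} becomes, after this conjugation, the generating field of the $\ell_n^{(j)}$: the polar term $2(\mu-\la)^{-2}$ of the propagator pins the central charge to $1$, while the free term $P_0^{\beta_j,\beta_j}$ supplies the normal-ordering constant.

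To conclude, I would expand the weight $(I^{(-m-1)}_{\beta_j}(t,\la),v_i)/(I^{(-1)}_{\beta_j}(t,\la),\one)$ in the same local coordinate and evaluate the residue in \eqref{eo-dop}. For fixed $j$, since $Y^{u_j}_{t,\la}$ has prescribed poles at $\la=u_j$ and the weight has a prescribed fractional-power expansion, the residue extracts a finite combination of local Virasoro modes, namely $\res_{\la=u_j}(\cdots)=\widehat{\Psi}\,\widehat{R}\big(\sum_{n\geq -1}c^{(j)}_{n,m,i}\,\ell_n^{(j)}\big)(\widehat{\Psi}\,\widehat{R})^{-1}$, the single-valuedness remark following \eqref{eo-dop} ensuring that only integer powers of the local coordinate survive. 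Since each $\ell_n^{(j)}$ with $n\geq -1$ annihilates $\prod_j\tau_j$, every residue term annihilates $\A_t$, and hence so does their sum $L_{m-1,i}$, which is the asserted family of $N$ Virasoro constraints.

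The main obstacle will be the identification in the second step, namely verifying that the conjugation by $\widehat{R}$ — which a priori couples all $N$ local systems — is exactly compensated by the non-principal part of the periods, so that after conjugation the field $Y^{u_j}_{t,\la}$ is genuinely diagonal and reduces to the single-boson Virasoro field of the $j$-th factor. This requires matching the $A_1$-period asymptotics order by order in $\sqrt{\la-u_j}$ against the Gauss--Manin (Laplace-transformed) characterization of $R$, and it is also where the anomaly computation lives: one must check that the polar and constant terms of the propagator reproduce exactly the central charge $1$ and the normal-ordering shift of the Kontsevich--Witten Virasoro algebra.
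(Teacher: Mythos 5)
Your proposal is correct and follows essentially the same route as the paper: the paper likewise conjugates $Y^{u_j}_{t,\la}$ through $\widehat{\mathcal{R}}_t$ into the $A_1$-field $Y^{A_1}_{u_j}$ (via Lemma \ref{vanishing_a1} and Lemma 6.12 of \cite{BM}), identifies the polar part of $Y^{A_1}_{u_j}$ with the Virasoro operators annihilating the Witten--Kontsevich factors, and concludes that $Y^{u_j}_{t,\la}\,\A_t$ is regular at $\la=u_j$, so each residue in \eqref{eo-dop} vanishes. The only cosmetic differences are that the paper phrases the conclusion as regularity of $Y^{u_j}_{t,\la}\,\A_t$ together with analyticity of the weight ratio (which is a series in \emph{integer} powers of $\la-u_j$, since numerator and denominator are both half-integer series), rather than as extracting modes $\ell_n^{(j)}$, $n\geq -1$ --- a combination which is in general infinite rather than finite, though harmless by tameness.
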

The proof follows easily from the definition of the
ancestor potential and some known properties of the periods
\eqref{period}. More precisely one can prove that
$Y_{t,\la}^{u_j}\,\A_t$ is regular near $\la=u_j$, so each residue
vanishes. The main property of the construction \eqref{eo-dop} 
is that $L_{m-1,i}$ has only one term that
involves the dilaton-shifted variable $q_1^1$ and this term is
$q_1^1\d/\d q_m^i$. This fact allows us to interpret the differential
operator constraints as a system of recursion relations. Our next
result is the following.
\begin{theorem}\label{t2}
The differential operator constraints determine a system of
recursion relations that coincide with the local Eynard--Orantin recursion. 
\end{theorem}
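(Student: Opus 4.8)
The plan is to derive the local Eynard--Orantin recursion from the single operator identity $L_{m-1,i}\,\A_t=0$ of \thref{t1}, by applying a product of external annihilation fields and extracting graded, connected coefficients. The bridge between the two formalisms is a dictionary identifying the $n$-point series \eqref{n-point-series} with derivatives of the total ancestor potential. The first step is to set up this dictionary: comparing the quantization \eqref{quant1} with the definition of the phase form $\phi^{\be}_+$, one finds, for pairwise distinct vanishing cycles and up to an explicit sign and power of $\hbar$, the identification
\ben
\omega_{g,n}^{\be_1,\dots,\be_n}(t;\la_1,\dots,\la_n)
&=& \Big(\prod_{s=1}^n \d_{\la_s}\widehat{\f^{\be_s}}_+(t,\la_s)\Big)\,\log\A_t\ \big|_{\q=0}\ \prod_{s=1}^n d\la_s,
\een
taken in its connected genus-$g$ part, so that the $n$-point series are exactly the connected correlators produced by the external fields. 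Under the same dictionary the genus-zero two-point series $\omega_{0,2}$ is the propagator, i.e.\ the commutator $[\,\d_\la\widehat{\f_+^{\be}}(t,\mu),\d_\la\widehat{\f_-^{\be}}(t,\la)\,]$ whose singular part is $2(\mu-\la)^{-2}$ and whose free term is $P_0^{\be,\be}$.

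The second step is to repackage the infinitely many constraints, indexed by $(m,i)$, into a single identity carrying a free distinguished variable $\la_0$. Pairing $L_{m-1,i}\A_t=0$ with the phase form $\phi^{\be_0}_+(t,\la_0)$, i.e.\ summing over $(m,i)$ against $(-1)^{m+1}\,(I^{(m+1)}_{\be_0}(t,\la_0),v^i)$, convolves the numerator periods $I^{(-m-1)}_{\be_j}(t,\la)$ of \eqref{eo-dop} with the external periods at $\la_0$; this convolution builds precisely the $\la_0$--$\la$ dependence of the recursion kernel \eqref{kernel-1}--\eqref{kernel-2}, while the denominator $(I^{(-1)}_{\be_j}(t,\la),\one)$ supplies its normalization. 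The key input is the observation recorded after \thref{t1}: the dilaton-shifted variable $q_1^1$ occurs in $L_{m-1,i}$ only through the single term $q_1^1\,\d/\d q_m^i$. After the dilaton shift \eqref{dilaton-shift} this term contributes the unique distinguished leg $(\be_0,\la_0)$ and hence produces $\omega_{g,n+1}$ on the left-hand side of the recursion, while all remaining terms assemble its right-hand side.

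The third step is to evaluate that right-hand side. Writing $A=\d_\la\widehat{\f^{\be_j}}$ for the field in \eqref{vir-j}, $A_\pm$ for its creation/annihilation parts, and $B$ for any external annihilation field, the only nontrivial commutator is the $c$-number contraction $[B,:\!A^2\!:]=2\,[B,A_-]\,A$, so commuting the external fields through $Y_{t,\la}^{u_j}$ produces factors of the propagator, i.e.\ of $\omega_{0,2}$, which enter the kernel. The remaining pair of annihilation derivatives inside $:\!A^2\!:$ acts on $\A_t=\exp(\log\A_t)$ via $A_+^2\exp=\big(A_+^2\log\A_t+(A_+\log\A_t)^2\big)\exp$; under the dictionary the first summand is the single term $\omega_{g-1,n+2}(\la,\sigma\la,\dots)$ and the second is the stable splitting $\sum\omega_{g_1}\,\omega_{g_2}$ of the recursion, with the residue $\res_{\la=u_j}$ and the sum over $j$ matching the sum over branch points. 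The paired argument $\sigma\la$, with $\sigma$ the local deck transformation, appears because near $\la=u_j$ the vanishing periods $I^{(k)}_{\be_j}$ are anti-invariant under the reflection of $\sqrt{\la-u_j}$, their leading behaviour being a half-integer power of $\la-u_j$.

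The main obstacle is the precise identification of the kernel and of the deck-transformation structure. Concretely, one must verify that the convolution of periods, the denominator $(I^{(-1)}_{\be_j},\one)$, and the free term $P_0^{\be_j,\be_j}$ of the propagator combine to reproduce \eqref{kernel-1}--\eqref{kernel-2} exactly, and that the local involution $\sigma$ read off from the $\sqrt{\la-u_j}$-expansion of \eqref{period} coincides with the deck transformation of the spectral curve. This rests on a careful analysis of the local expansions of the periods near each branch point $u_j$, and it is here that essentially all of the computation resides; the regularity and single-valuedness noted after \eqref{eo-dop} guarantee that the residues, and therefore the recursion, are well defined.
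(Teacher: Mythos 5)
Your overall route is in fact the paper's own: extract the coefficient of the dilaton-shifted variable $q_1^1$ from $L_{m-1,i}\,\A_t=0$, sum the constraints against $(-1)^{m+1}(I^{(m+1)}_{\beta_0}(t,\la_0),v^i)$ to free up a distinguished variable (the paper's identity \eqref{recursion}), decompose the action of $Y^{u_j}_{t,\la}$ on $\A_t$ into loop, splitting and propagator terms (the paper's \eqref{loop:stable}--\eqref{split:unstable2}, with the unstable $(0,2)$ conventions playing the role of your dictionary), and finally insert the external legs --- the paper does this via $\hbar^{-1/2}[\widehat{\phi^\beta}_+(t,\la),\t(\psi)]=\phi^\beta_+(t,\la;\psi)$, which is equivalent to your derivation identities for $\log\A_t$. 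The genuine gap sits exactly where you place ``essentially all of the computation'' and then do not carry it out: everything you defer to a ``careful analysis of the local expansions of the periods'' is the content of the paper's \leref{hrp},
\ben
\sum_{j=1}^N {\rm Res}_{\la=u_j}\ (I^{(k')}_{\beta_j}(t,\la),v_a)\,(I^{(k'')}_{\beta_j}(t,\la),v^b)\,d\la
= 2(-1)^{k'}\delta_{a,b}\,\delta_{k'+k'',0}\,,
\een
proved by factoring the periods through \leref{vanishing_a1}, reducing to the explicit $A_1$-periods, and invoking the symplectic condition $R(z)\leftexp{T}{R}(-z)=1$. This one identity is what (i) \emph{proves} the statement you import from the introduction without proof, namely that $q_1^1$ occurs in $L_{m-1,i}$ only through $q_1^1\,\d/\d q_m^i$ with coefficient exactly $1$ (the computation \eqref{coef}); (ii) pins the kernel down to \eqref{kernel-1}--\eqref{kernel-2}; and (iii) supplies the inversion $\t(\psi)=\tfrac12\sum_j{\rm Res}_{\la=u_j}\,\Omega(\f^{\beta_j}_-(t,\la;z),\t(z))\,\phi^{\beta_j}(t,\la;\psi)$, which the paper uses for the opposite direction (Eynard--Orantin $\Rightarrow$ Virasoro) --- a direction your proposal omits entirely, although it is part of the claimed coincidence of the two systems of relations.

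Three smaller points. First, your dictionary should be evaluated at $\t=0$, i.e.\ at $\q(z)=-z$ after the dilaton shift \eqref{dilaton-shift}, not at $\q=0$; at $\q=0$ one has $t_1^1=1$, and the derivatives of $\log\A_t$ there are not the $n$-point series \eqref{n-point-series}. Second, the infinite sum over $m$ that you form in your second step does not make sense as an operator; the paper justifies the manipulation by the tameness of the ancestor correlators (for fixed $(g,n)$ only finitely many terms of \eqref{recursion} contribute), and some such argument is indispensable. Third, the paper's local recursion in Section \ref{sec:EO} places \emph{both} loop insertions at the same point $\la$, with the deck transformation absorbed into the local-system label $\beta_j$ via the anti-invariance of the vanishing periods; your $\sigma\la$ formulation is the standard spectral-curve convention and is reconcilable, but matching the theorem as stated requires fixing the resulting sign through that anti-invariance rather than leaving it open.
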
 
We postpone the definition of the Eynard--Orantin recursion until Section
\ref{sec:EO}.
Following \cite{BOSS} we give the definition of the recursion only
locally. It will be very important to find the global
formulation (see \cite{BE}) as well, but for now this seems to be a
very challenging problem, except may be for simple and simple elliptic
singularities in singularity theory or the projective line and its
orbifold versions in GW theory. The problems is that in 
the settings of singularity theory, or GW theory, the spectral curve
has highly transcendental nature. Its description requires inverting the period map: very
classical and very difficult problem. 
Our set up is slightly different from the standard conventions (\cite{BE,BOSS,DMSS,EO}),
because we would like to work with multivalued correlation
functions. The main idea is that whatever is the spectral curve $\Sigma$,
we always have a projection $\Sigma\to \mathbb{P}^1$ which in general
is an
infinite sheet branched covering. Galois theory tells us that studying
the field of meromorphic functions on $\Si$ is the same as the field
of multivalued meromorphic functions on $\mathbb{P}^1$ invariant under
some monodromy (Galois) group. Our proposal is to formulate the
Eynard--Orantin recursion for correlation functions on $\mathbb{P}^1$
that take values in some local system $\L$. The definition that we
give in Section \ref{sec:EO} is just a first attempt to set up this
idea. Probably a better formulation is possible if one takes into
account more examples not only the ones that come from GW theory. 

Let us point out that although we work in the settings of singularity
theory, the differential operators \eqref{eo-dop} can be defined for
any semi-simple Frobenius manifold that has an Euler vector field. The
period vectors should be introduced as the solutions to a system of
differential equations (see Lemma \ref{lem:periods}) and it is easy to
see that the proofs of Theorems \ref{t1} and \ref{t2} remain the
same. In particular, we obtain the main result of \cite{BOSS}
\begin{corollary}\label{c1}
The $n$-point series of a semi-simple cohomological field theory
satisfy the local  Eynard--Orantin recursion relations. 
\end{corollary}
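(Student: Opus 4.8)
The plan for Corollary 1.4 is to show that nothing in the proofs of Theorems 1.1 and 1.2 actually used the singularity-theoretic origin of the data, only three structural inputs: the existence of a semi-simple Frobenius manifold with an Euler vector field, the associated system of period vectors, and the total ancestor potential built by Givental's higher-genus formalism. Since Givental's quantization construction of the total ancestor potential is purely Frobenius-theoretic (it depends only on the Frobenius structure, the canonical coordinates at a semi-simple point, and the $R$-matrix of the formal solution to the flatness equations), the potential $\A_t(\hbar;\q)$ exists verbatim in this generality, and the correlators \eqref{correlators} and the $n$-point series \eqref{n-point-series} are defined in exactly the same way.

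The one genuinely new ingredient is the replacement of the geometric period integrals \eqref{period}. First I would invoke Lemma \ref{lem:periods}, which characterizes the period vectors $I^{(k)}_\alpha(t,\la)$ abstractly as the solutions of a system of linear ODEs in $\la$ and $t$ built from the Frobenius multiplication, the Euler field, and the grading operator. The key point is that all the properties of the periods that are used in the proofs---their behavior under $\d_\la$ (which shifts the index $k$), the Picard--Lefschetz/monodromy behavior of a vanishing cycle $\be_j$ around $\la=u_j$, the leading singular behavior of $I^{(-1)}_{\be_j}$ and $I^{(-m-1)}_{\be_j}$ near the branch point $\la=u_j$, and the propagator expansion defining $P^{\be_j,\be_j}_k(t,\la)$---are consequences of this ODE system alone and do not require that $B$ be the base of a miniversal deformation. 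Thus the differential operators $Y^{u_j}_{t,\la}$ in \eqref{vir-j} and $L_{m-1,i}$ in \eqref{eo-dop} are well defined for any semi-simple Frobenius manifold with Euler field.

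With these objects in place, the proofs transcribe directly. For Theorem \ref{t1}, the essential computation is that $Y^{u_j}_{t,\la}\,\A_t$ is regular at $\la=u_j$, so that each residue in $L_{m-1,i}$ annihilates $\A_t$; this regularity follows from the local monodromy invariance together with the way the quantized operators \eqref{quant1}--\eqref{quant2} act on the canonical-coordinate factorization of $\A_t$, all of which is governed by the abstract period ODE. For Theorem \ref{t2}, the decisive structural fact is that, after the dilaton shift \eqref{dilaton-shift}, each $L_{m-1,i}$ contains exactly one term in the dilaton-shifted variable $q^1_1$, namely $q^1_1\,\d/\d q^i_m$; since this feature is read off from the residue of $(I^{(-m-1)}_{\be_j},v_i)/(I^{(-1)}_{\be_j},\one)$ at $\la=u_j$, it too survives in the Frobenius setting, and the constraints again reorganize into the local Eynard--Orantin recursion.

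The main obstacle I anticipate is purely one of bookkeeping rather than of new ideas: one must verify that every analytic input to the singularity-theory proofs---especially the precise orders of vanishing and the residue/monodromy computations around the branch points $u_j$---genuinely follows from Lemma \ref{lem:periods} and is not secretly using the integral representation \eqref{period}. In particular the normalization of the propagator, whose leading term $2(\mu-\la)^{-2}$ encodes the intersection form of vanishing cycles, must be recovered from the ODE characterization of the periods; once this is checked, the statement that ``the proofs of Theorems \ref{t1} and \ref{t2} remain the same'' is justified, and specializing to the case where the Frobenius manifold is the quantum cohomology of a target with semi-simple quantum product yields the $n$-point series of the corresponding cohomological field theory, which is precisely the assertion of \cite{BOSS}.
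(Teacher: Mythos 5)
Your proposal is correct and follows essentially the same route as the paper, which likewise deduces Corollary \ref{c1} by observing that the operators \eqref{eo-dop} make sense for any semi-simple Frobenius manifold with an Euler vector field once the period vectors are defined abstractly as solutions of the differential equations in Lemma \ref{lem:periods}, after which the proofs of Theorems \ref{t1} and \ref{t2} carry over verbatim. Your elaboration of the bookkeeping points (the monodromy and propagator normalization coming from the ODE characterization rather than the integral representation \eqref{period}) is a faithful expansion of what the paper leaves implicit in its one-line justification.
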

\subsection{Acknowledgement}
I am thankful to M. Mulase for teaching me how the
Eynard--Orantin recursion can be constructed in terms of 1- and
2-point functions.  Also, I thank S. Shadrin for useful
e-mail communication and clarifying some points from
\cite{BOSS}. This work is supported by Grant-In-Aid and by the World
Premier International Research Center Initiative (WPI Initiative),
MEXT, Japan. 

\section{Frobenius structures in singularity theory}\label{sec:sing}

Let $f\colon(\C^{2l+1},0)\rightarrow (\C,0)$ be the germ of a holomorphic function with an isolated critical point of multiplicity $N$. Denote by 
\begin{equation*}
H = \C[[x_0,\ldots,x_{2l}]]/(\d_{x_0}f,\ldots,\d_{x_{2l}}f)
\end{equation*}
the \emph{local algebra} of the critical point; then $\dim H=N$. 

\begin{definition}\label{dmvdef}
A \emph{miniversal deformation} of $f$ is a germ of a holomorphic function $F\colon(\C^N\times \C^{2l+1},0)\to (\C,0)$ satisfying the following two properties:
\begin{enumerate}
\item[(1)]
$F$ is a deformation of $f$, i.e., $F(0,x)=f(x)$.
\item[(2)]
The partial derivatives $\d F/\d t^i$ $(1\leq i\leq N)$ project to a basis in the local algebra 
$$
\O_{\C^N,0}[[x_0,\dots,x_{2l}]]/\langle \d_{x_0}F,\dots,\d_{x_{2l}}F\rangle.
$$
\end{enumerate}
Here we denote by $t=(t^1,\dots,t^N)$ and $x=(x_0,\dots,x_{2l})$ the standard coordinates on $\C^N$ and $\C^{2l+1}$ respectively, and $\O_{\C^N,0}$ is the algebra of germs at $0$ of holomorphic functions on $\C^N.$
\end{definition}

We fix a representative of the holomorphic germ $F$, which we denote again by $F$, with a domain $X$ constructed as follows. Let 
\begin{equation*}
B_\rho^{2l+1}\subset \C^{2l+1} \,, \qquad 
B=B_\eta^N\subset \C^N \,, \qquad 
B_\delta^1\subset \C 
\end{equation*}
be balls with centers at $0$ and radii $\rho,\eta$, and $\delta$, respectively.
We set 
\begin{equation*}
S=B\times B_\delta^1 \subset\C^N\times\C \,, \quad 
X=(B\times B_\rho^{2l+1})\cap \phi^{-1}(S) \subset\C^N\times\C^{2l+1} \,,
\end{equation*}
where
\ben
\phi\colon B\times B_\rho^{2l+1}\to B\times\C \,,
\qquad (t,x)\mapsto (t,F(t,x)) \,.
\een 
This map induces a map $\phi\colon X\to S$ and we denote by $X_s$ or $X_{t,\la}$ the fiber 
\ben
X_s = X_{t,\la} = \{(t,x)\in X \,|\, F(t,x)=\la\} \,,\qquad s=(t,\la)\in S.
\een  
The number $\rho$ is chosen so small that for all $r$, $0<r\leq \rho$, the fiber $X_{0,0}$ intersects transversely the boundary $\d B_r^{2l+1}$ of the ball with radius $r$. Then we choose the numbers $\eta$ and $\delta$ small enough so that for all $s\in S$ the fiber $X_s$ intersects transversely the boundary $\d B_\rho^{2l+1}.$ Finally, we can assume without loss of generality that the critical values of $F$ are contained in a disk $B_{\delta_0}^1$ with radius $\delta_0<1<\delta$.

Let $\Si$ be the {\em discriminant} of the map $\phi$, i.e., the set
of all points $s\in S$ such that the fiber $X_s$ is singular. Put 
\begin{equation*}
S'=S\setminus{\Si} \subset\C^N\times\C \,, \qquad 
X'=\phi^{-1}(S') \subset X \subset\C^N\times\C^{2l+1} \,.
\end{equation*}
Then the map $\phi\colon X'\to S'$ is a smooth fibration, called 
the \emph{Milnor fibration}. In particular, all smooth fibers are diffeomorphic to $X_{0,1}$.
The middle homology group of the smooth fiber, equipped with the bilinear form
$(\cdot|\cdot)$ equal to $(-1)^l$ times the intersection form, 
is known as the \emph{Milnor lattice} $Q=H_{2l}(X_{0,1};\Z)$. 

For a generic point $s\in\Si$, the singularity of the fiber $X_s$
is Morse. Thus, every choice of a path from $(0,1)$ to $s$ avoiding $\Si$
leads to a group homomorphism $Q \to H_{2l}(X_s;\Z)$. The kernel of this
homomorphism is a free $\Z$-module of rank $1$. A generator  
$\al\in Q$ of the kernel is called a \emph{vanishing cycle} if 
$(\al|\al) = 2$. 

\subsection{Frobenius structure}\label{sec:frobenius}

Let $\T_B$ be the sheaf of holomorphic vector fields on $B$. Condition (2) in \deref{dmvdef} implies that the map 
$$
\d/\d{t^i}\mapsto \d F/\d t^i \mod \langle \d_{x_0} F,\dots,\d_{x_{2l}}F\rangle \qquad (1\leq i\leq N)
$$ 
induces an isomorphism between $\T_B$ and $p_*\O_C$, where $p\colon X\to B$ is the natural projection $(t,x)\mapsto t$ and 
\ben
\O_C:=\O_X/\langle \d_{x_0} F,\dots,\d_{x_{2l}}F\rangle
\een
is the structure sheaf of the critical set of $F$. In particular, since $\O_C$ is an algebra, the sheaf $\T_B$ is equipped with an associative commutative multiplication, which will be denoted by $\bullet.$ It induces a product $\bullet_t$ on the tangent space of every point $t\in B$. The class of the function $F$ in $\O_C$ defines a vector field $E\in \T_B$, called the {\em Euler vector field}. 

Given a holomorphic volume form $\omega$ on $(\C^{2l+1},0)$, possibly
depending on $t\in B$, we can equip $p_*\O_C$ with the so-called
\emph{residue pairing}:
\ben
(\psi_1(t,x),\psi_2(t,x)) :=
\Big(\frac{1}{2\pi i}\Big)^{2l+1}\int_{\Gamma_\epsilon} 
\frac{\psi_1(t,y)\ \psi_2(t,y)}
{\d_{y_0} F \cdots \d_{y_{2l}} F } \,\omega\,,
\een
where $y=(y_0,\dots,y_{2l})$ is a unimodular coordinate system for
$\omega$ (i.e. $\omega=dy_0\wedge \cdots \wedge dy_{2l}$) and the integration cycle $\Gamma_\epsilon$ is supported on 
$|\d_{y_0} F|= \cdots =|\d_{y_{2l}}F|=\epsilon$. 
Using that $\T_B\cong p_*\O_C$, we get a non-degenerate complex
bilinear form $(\ ,\ )$ on $\T_B$, which we still call residue
pairing.    

For $t\in B$ and $z\in\C^*$, let
$\B_{t,z}$ be a semi-infinite cycle in $\C^{2l+1}$ of the following type:
\ben
\B_{t,z}\in \lim_{\rho \to \infty} \, H_{2l+1}(\C^{2l+1},\{ 
  \mathrm{Re}\, z^{-1} F(t,x)<-\rho\} ;\C) \cong \C^N \,.
\een
The above homology groups form a vector bundle on $B\times \C^*$ equipped
naturally with a Gauss--Manin connection, and $\B=\B_{t,z}$ may be viewed as a
flat section. According to K.\ Saito's theory of {\em primitive forms} \cite{S1,MS}
there exists a form $\omega$, called primitive, such that the oscillatory
integrals ($d^B$ is the de Rham differential on $B$)
\ben
J_\B(t,z):= (2\pi z)^{-l-\frac{1}{2}}\ (zd^B)\, 
\int_{\B_{t,z}} e^{z^{-1}F(t,x)}\omega \in \T_B^*
\een
are horizontal sections for the following connection: 
\beqa\label{frob_eq3}
\nabla_{\d/\d t^i} & = &  \nabla^{\rm L.C.}_{\d/\d t^i} - z^{-1}(\d_{t^i} \bullet_t),
\qquad 1\leq i\leq N \\
\label{frob_eq4}
\nabla_{\d/\d z} & = &  \d_z - z^{-1} \theta + z^{-2} E\bullet_t \,.
\eeqa
Here $\nabla^{\rm L.C.}$ is the Levi--Civita connection associated with the residue pairing and
\ben
\theta:=\nabla^{\rm L.C.}E-\Big(1-\frac{d}{2}\Big){\rm Id},
\een 
where $d$ is some complex number. 
In particular, this means that the residue pairing and the multiplication $\bullet$ form a {\em Frobenius structure} on $B$
of conformal dimension $d$ with identity $1$ and Euler vector field $E$. For the definition of a Frobenius structure we refer to \cite{Du} .

Assume that a primitive form $\omega$ is chosen. Note that the flatness of the Gauss--Manin connection implies that the residue pairing is flat. Denote by $(\tau^1,\dots, \tau^N)$ a coordinate system on $B$ that is flat with respect to the residue metric, and write $\partial_i$ for the vector field $\partial/\partial{\tau^i}$. We can further modify the flat coordinate system so that the Euler field is the sum of a constant and linear fields: 
\ben
E=\sum_{i=1}^N (1-d_i) \tau^i \partial_{i} + \sum_{i=1}^N \rho_i \partial_i \,.
\een
The constant part represents the class of $f$ in $H$, and the spectrum
of degrees $d_1,\dots, d_N$ ranges from $0$ to $d.$ 
Note that in the flat coordinates $\tau^i$ the operator $\theta$ (called sometimes the \emph{Hodge grading operator}) assumes diagonal form:
\ben
\theta(\d_i) = \Bigl(\frac{d}{2}-d_i\Bigr) \d_i \,, \qquad\quad 1\leq i\leq N \,.
\een
Finally, the vectors $v_i\in H$ appearing in formula
\eqref{correlators} are the images of the flat vector fields $\d_i$
via the Kodaira--Spencer isomorphism \eqref{KS}.

\subsection{Period integrals}\label{sec:periods}
Given a middle homology class $\al\in H_{2l}(X_{0,1};\C)$, we denote by $\al_{t,\la}$ its parallel transport to 
the Milnor fiber $X_{t,\la}$. Let $d^{-1}\omega$ be any $2l$-form whose 
differential is $\omega$. We can integrate $d^{-1}\omega$ over $\al_{t,\la}$
and obtain multivalued functions of $\la$ and $t$ 
ramified around the discriminant in $S$ (over which 
the Milnor fibers become singular).

\begin{definition}\label{dperiods}
To $\al\in H_{2l}(X_{0,1};\C)$, we associate the {\em 
period vectors} $I^{(k)}_\al(t,\la)\in H\ (k\in \Z)$ defined by
\beq\label{periods} 
(I^{(k)}_\al(t,\la), \partial_i):= 
-(2\pi)^{-l} \d_\la^{l+k} \d_i \int_{\al_{t,\la}} d^{-1}\omega \,,
\qquad 1\leq i\leq N \,.
\eeq 
\end{definition}

Note that this definition is consistent with the operation of stabilization of
singularities. Namely, adding the squares of two new variables does not change
the right-hand side, since it is offset by an extra differentiation 
$(2\pi)^{-1}\partial_{\la}$. In particular, this defines the period
vector for a negative value of $k\geq -l$ with $l$ as large as one wishes.
Note that, by definition, we have 
\ben
\d_\la I^{(k)}_\al(t,\la) = I^{(k+1)}_\al(t,\la) \,, \qquad k\in\Z\,.
\een
The following lemma is due to A.\ Givental \cite{G3}.
\begin{lemma}\label{lem:periods}
The period vectors \eqref{periods} satisfy the differential 
equations
\begin{align}\label{periods:de1}
\d_i I^{(k)} &= -\d_i\bullet_t(\d_\la I^{(k)})\,, \qquad\quad 1\leq i\leq N \,, 
\\ \label{periods:de2}
(\la-E\bullet_t) \d_{\la}I^{(k)} &= \Bigl(\theta-k-\frac12\Bigr) I^{(k)} \,.
\end{align}
\end{lemma}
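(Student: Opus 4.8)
The plan is to deduce both identities from the horizontality of the oscillatory integrals $J_\B$ for the connection \eqref{frob_eq3}--\eqref{frob_eq4}, transported to the $\la$-picture by a Laplace transform. Writing $\Phi_\al(t,\la):=\int_{\al_{t,\la}}d^{-1}\omega$, I would first record the slicing identity obtained by foliating a semi-infinite (Lefschetz) cycle $\B$ by the level sets $\{F=\la\}$, namely
\beq
\int_{\B_{t,z}}e^{z^{-1}F}\,\omega=\int e^{z^{-1}\la}\,\d_\la\Phi_\al(t,\la)\,d\la,
\eeq
together with the Gelfand--Leray relation $\d_\la\Phi_\al=\int_{\al_{t,\la}}\omega/dF$. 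Packaging the first mixed derivatives into a single $H$-valued function $G(t,\la)$ with components $(G,\d_i):=\d_i\d_\la\Phi_\al$, the definition \eqref{periods} reads $I^{(k)}_\al=-(2\pi)^{-l}\,\d_\la^{\,l+k-1}G$. Thus it suffices to prove the two base equations
\beq
\d_i G=-(\d_i\bullet_t)\,\d_\la G,\qquad (\la-E\bullet_t)\,\d_\la G=(\theta+l-\tfrac32)G,
\eeq
and then to propagate them in $k$.

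For the base equations I would feed the slicing identity into $J_\B=(2\pi z)^{-l-1/2}(zd^B)\int_{\B}e^{z^{-1}F}\omega$, so that componentwise $(J_\B,\d_i)=(2\pi)^{-l-1/2}z^{-l+1/2}L[\,\d_i\d_\la\Phi_\al\,]$, where $L[g]:=\int e^{z^{-1}\la}g\,d\la$. The two correspondences $L[\d_\la g]=-z^{-1}L[g]$ and $L[\la g]=-z^2\d_z L[g]$ (integration by parts, with boundary terms killed by the semi-infinite nature of $\B$) turn $\nabla_{\d/\d t^i}J_\B=0$ into the first base equation and $\nabla_{\d/\d z}J_\B=0$ into the second; the prefactor $z^{-l+1/2}$ is exactly what produces the constant $l-\tfrac32$. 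Removing the symbol $L$ requires that the transform be injective, which I would justify by letting $\B$, and hence the $\la$-contour, vary over a basis of semi-infinite cycles.

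Finally I would propagate to arbitrary $k\in\Z$. Since $\d_i$ and $\bullet_t$ commute with $\d_\la$, applying $\d_\la^{\,l+k-1}$ to the first base equation and using $I^{(k)}_\al=-(2\pi)^{-l}\d_\la^{\,l+k-1}G$ gives \eqref{periods:de1} at once. For \eqref{periods:de2} I would apply $\d_\la^{\,l+k-1}$ to the second base equation and use the commutator $[\d_\la^{\,m},\la]=m\,\d_\la^{\,m-1}$ with $m=l+k-1$; the commutator contributes $m\,\d_\la^{m}G$, and combining it with the constant $l-\tfrac32$ yields exactly $l-\tfrac32-m=-k-\tfrac12$, i.e. $(\la-E\bullet_t)\d_\la I^{(k)}_\al=(\theta-k-\tfrac12)I^{(k)}_\al$.

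The main obstacle is the rigorous setup of the first step: identifying the semi-infinite cycles $\B$ with thimbles over the vanishing cycles $\al$, choosing the $\la$-contour so that the slicing identity and both integrations by parts are valid without boundary contributions, and verifying injectivity of the Laplace transform so that the differential relations may be read off without the integral sign. A secondary bookkeeping point is matching conventions: one must check that $\theta$ and $E\bullet_t$ act by the correct adjoint operators on $T^*B\cong H$ and track the half-integer powers of $z$, so that the constants in the second base equation come out as stated rather than shifted.
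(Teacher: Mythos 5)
Your proof is correct, and it is essentially \emph{the} proof: the paper itself does not prove the lemma but attributes it to Givental \cite{G3}, and the standard derivation there is exactly your Laplace-transform translation of the horizontality of $J_\B$ with respect to \eqref{frob_eq3}--\eqref{frob_eq4}, with $\B$ the Lefschetz thimble swept out by the cycles $\al_{t,\la}$. Your constants check out. Writing $(J_\B,\d_i)=(2\pi)^{-l-1/2}z^{-l+1/2}L[(G,\d_i)]$ and using $L[\d_\la g]=-z^{-1}L[g]$, $L[\la g]=-z^{2}\d_z L[g]$, equation \eqref{de_1} gives $\d_i G=-(\d_i\bullet_t)\d_\la G$, and \eqref{de_2} gives $(\la-E\bullet_t)\d_\la G=(\theta+l-\tfrac32)G$. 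One small imprecision: the prefactor $z^{-l+1/2}$ alone contributes $-l+\tfrac12$; the remaining $1$ in $l-\tfrac32$ comes from the integration by parts $z^{-1}L[\la G]=-L[G+\la\,\d_\la G]$. Your final bookkeeping is right: applying $\d_\la^{\,l+k-1}$ and $[\d_\la^{\,m},\la]=m\,\d_\la^{\,m-1}$ turns $\theta+l-\tfrac32$ into $\theta-k-\tfrac12$, i.e.\ \eqref{periods:de2}, while \eqref{periods:de1} is immediate since $\d_i$ and $\bullet_t$ commute with $\d_\la$.

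Two points you should make explicit, neither of which damages the argument. First, for $k<1-l$ the exponent $m=l+k-1$ is negative, so $\d_\la^{\,m}$ is an antiderivative and the commutator identity as written does not apply; this is harmless because, as noted after Definition \ref{dperiods}, the periods are stable under adding squares of variables, so you may run the whole argument with $l$ replaced by an arbitrarily large $l'$, covering all $k\in\Z$ uniformly (alternatively, once \eqref{periods:de2} holds for one $k$ it propagates downward by integration, the integration constant being fixed by the known behavior as $\la\to\infty$ forced by \eqref{periods:de2} itself). Second, the ``injectivity of $L$'' should be phrased concretely: either invert $L$ by a contour integral of $e^{-\la/z}$ over $z$, or note that \eqref{de_1}--\eqref{de_2} hold for every thimble in a basis and compare asymptotic expansions as $z\to 0$ along rays; the flagged adjoint/bookkeeping issue for $\theta$ and $E\bullet_t$ on $T^*B\cong H$ is already absorbed into the paper's form of \eqref{de_1}--\eqref{de_2}, so nothing further is needed there.
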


Using equation \eqref{periods:de2},
we analytically extend the period vectors to all 
$|\la|>\delta$. It follows from \eqref{periods:de1} that the period vectors 
have the symmetry
\beq\label{tinv}
I^{(k)}_\al(t,\la)\ = \ I^{(k)}_\al(t-\la\one,0) \,,
\eeq  
where $t \mapsto t-\la\one $ denotes the time-$\la$ translation 
in the direction of the flat vector field $\one $ obtained from $1\in H$. 
(The latter represents identity elements for all the products
$\bullet_t$.)

\subsection{Stationary phase asymptotic}\label{sec:asymptotic}

Let $u_i(t)$ ($1\leq i\leq N$) be the critical values of $F(t,\cdot)$. For a generic $t$, they form a local coordinate system on $B$ in which the Frobenius multiplication and the residue pairing are diagonal. Namely,
\ben
\d/\d u_i \, \bullet_t\, \d/\d u_j = \delta_{ij}\d/\d u_j \,,\quad 
\(\d/\d u_i,\d/\d u_j \) = \delta_{ij}/\Delta_i \,,
\een
where $\Delta_i$ is the Hessian of $F$ with respect to the volume form $\omega$ at the critical point corresponding to the critical value $u_i.$ 
Therefore, the Frobenius structure is \emph{semi-simple}.

We denote by $\Psi_t$ the following linear isomorphism
\ben
\Psi_t\colon \C^N\rightarrow T_t B \,,\qquad 
e_i\mapsto \sqrt{\Delta_i}\d/\d u_i \,,
\een
where $\{e_1,\dots,e_N\}$ is the standard basis for $\C^N$.
Let $U_t$ be the diagonal matrix with entries $u_1(t),\ldots, u_N(t)$. 

According to Givental \cite{G1}, the system of differential equations (cf.\ \eqref{frob_eq3}, \eqref{frob_eq4})
\begin{align}\label{de_1}
z\d_i J(t,z) &= \d_i\bullet_t J(t,z) \,,\qquad\quad 1\leq i\leq N \,,
\\
\label{de_2}
z\d_z J(t,z) &= (\theta-z^{-1} E\bullet_t) J(t,z)
\end{align}
has a unique formal asymptotic solution of the form $\Psi_t R_t(z) e^{U_t/z}$, 
where 
\ben
R_t(z)=1+R_1(t)z+R_2(t)z^2+\cdots \,,
\een
and $R_k(t)$ are linear operators on $\C^N$ uniquely determined from the differential 
equations \eqref{de_1} and \eqref{de_2}.  
Introduce the formal series
\beq\label{falpha}
\f_\al(t,\la;z) = \sum_{k\in \Z} I^{(k)}_\al(t,\la) \, (-z)^k \,.
\eeq
Note that for $A_1$-singularity $F(t,x)=x^2/2+t$ we have $u:=u_1(t)=t$
and the series \eqref{falpha} takes the form
\ben
\f_{A_1}(t,\la;z) = \sum_{k\in \Z}\, I^{(k)}_{A_1}(u,\la)\, (-z)^k,
\een
where
\ben
\begin{aligned}
I^{(k)}_{A_1}(u,\la) & = (-1)^k\, \frac{(2k-1)!!}{2^{k-1/2}}\,
(\la-u)^{-k-1/2},\quad k\geq 0 \\
I^{(-k-1)}_{A_1}(u,\la) & = 2\, \frac{2^{k+1/2}}{(2k+1)!!}\,
(\la-u)^{k+1/2}, \quad k\geq 0.
\end{aligned}
\een
The key lemma, which is due to Givental \cite{G3} is the following.
\begin{lemma}\label{vanishing_a1}
Let\/ $t\in B$ be generic and\/ $\be$ be a vanishing cycle vanishing over the point\/ $(t,u_i(t))\in \Si$. Then for all\/ $\la$ near\/ $u_i:=u_i(t)$, we have
\ben
\f_{\be}(t,\la;z) = \Psi_t R_t(z)\,  e_i\,  \f_{A_1}(u_i,\la;z)\,.
\een
\end{lemma}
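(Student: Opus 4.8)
The plan is to prove the identity by showing that both sides solve the \emph{same} linear system of differential equations in $(t,\la,z)$, and then to fix the single remaining constant by comparing the ramified leading term as $\la\to u_i$. First I would convert \leref{lem:periods} into equations for the generating series $\f_\al$ of \eqref{falpha}. Since $\d_\la I^{(k)}_\al=I^{(k+1)}_\al$, summing the two identities of \leref{lem:periods} against $(-z)^k$ over $k\in\Z$ gives
\[
z\,\d_i\f_\al=(\d_i\bullet_t)\,\f_\al,\qquad
z\d_z\f_\al=\Big(\theta-\tfrac12+z^{-1}(\la-E\bullet_t)\Big)\f_\al,
\]
while $\d_\la I^{(k)}_\al=I^{(k+1)}_\al$ gives directly $\d_\la\f_\al=-z^{-1}\f_\al$.

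Comparing with \eqref{de_1} and \eqref{de_2}, these are precisely the equations for $J$ up to the explicit gauge factor $z^{-1/2}e^{-\la/z}$: writing $\f_\al=z^{-1/2}e^{-\la/z}J_\al(t,z)$, the factor $e^{-\la/z}$ accounts for the extra term $z^{-1}\la$ and enforces $\d_\la\f_\al=-z^{-1}\f_\al$, whereas $z^{-1/2}$ accounts for the grading shift $\theta\mapsto\theta-\tfrac12$, so that $J_\al$ solves \eqref{de_1}, \eqref{de_2}. This reduction is consistent with the symmetry \eqref{tinv}: translation in the direction of $\one$ sends $U_t\mapsto U_t-\la$ and leaves $\Psi_t,R_t$ invariant, so it is enough to understand the $z$-series structure at fixed $\la$.

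Next I would check that the right-hand side is of the same type. The asymptotic fundamental solution $\Psi_tR_t(z)e^{U_t/z}$ solves \eqref{de_1}, \eqref{de_2}, so its $i$-th column $\Psi_tR_t(z)e_i\,e^{u_i/z}$ is a solution; applying the identical reduction to the one-dimensional $A_1$ structure (where $E\bullet=u$ and $\Psi=R=1$) shows that the scalar factor $\f_{A_1}(u_i,\la;z)$ supplies exactly $z^{-1/2}e^{(u_i-\la)/z}$ up to normalization. Hence $\Psi_tR_t(z)e_i\,\f_{A_1}(u_i,\la;z)$ is again $z^{-1/2}e^{-\la/z}$ times a solution of \eqref{de_1}, \eqref{de_2}, and both sides solve the same holonomic system, so they can differ only by a constant. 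To pin this constant I would compare principal parts at $\la=u_i$: by the Morse lemma the critical point over $u_i$ has a nondegenerate quadratic model, so $\int_{\be_{t,\la}}d^{-1}\omega$ reduces to the $A_1$ integral rescaled by the Hessian $\Delta_i$, and tracing through \eqref{periods} the most singular term of $\f_\be$ is $\sqrt{\Delta_i}\,\sqrt2\,(\la-u_i)^{-1/2}$ along $\d/\d u_i$. This is exactly $\Psi_te_i$ times the leading term $I^{(0)}_{A_1}=\sqrt2\,(\la-u_i)^{-1/2}$ of $\f_{A_1}$, which selects the $i$-th column, sets the constant to $1$, and matches the half-integral monodromy around $\la=u_i$; the agreement of all higher $z$-coefficients then follows because $R_t$ is the \emph{unique} solution of \eqref{de_1}, \eqref{de_2}.

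The hard part will be the last step together with the reconciliation of the bi-infinite period series with the formal asymptotic solution. The identity must be read as one of formal asymptotic expansions in $z$ with $(\la-u_i)$-ramified coefficients, and its real content is that the stationary-phase asymptotics of the ramified period of the vanishing cycle $\be$ is governed by $\Psi_tR_t(z)e^{U_t/z}$. Establishing the leading coefficient $\sqrt{2\Delta_i}$ and the correct square-root branch from the Morse lemma, and confirming that the solution singled out by $\be$ is the $i$-th column rather than a mixture of columns, is the delicate point; once the normalization is fixed, propagation to all orders via the uniqueness of $R_t$ is routine.
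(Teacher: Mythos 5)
The first thing to note is that the paper itself contains no proof of Lemma \ref{vanishing_a1}: it is quoted from Givental \cite{G3}, where it is obtained by computing the stationary phase expansion of the oscillatory integrals over Lefschetz thimbles (which is what defines $\Psi_t R_t(z)e^{U_t/z}$ in the singularity setting) and transporting it to the period integrals by a formal Laplace transform in $\la$. Your route --- characterize both sides as solutions of one differential system and normalize by the leading asymptotics --- is therefore a genuinely different line of attack, and most of it checks out. Summing \leref{lem:periods} against $(-z)^k$ indeed gives $z\,\d_i\f_\al=(\d_i\bullet_t)\f_\al$, $\d_\la\f_\al=-z^{-1}\f_\al$ and $z\d_z\f_\al=(\theta-\tfrac12+z^{-1}(\la-E\bullet_t))\f_\al$, and the gauge $J=z^{1/2}e^{\la/z}\f$ converts this into \eqref{de_1}--\eqref{de_2}; the series $\f_{A_1}$ satisfies the corresponding scalar equations, so the right-hand side solves the same system. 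One presentational caveat: multiplying a bi-infinite $z$-series by $e^{\pm\la/z}$ produces ill-defined infinite coefficient sums, so the verification should be phrased as a coefficient-wise Leibniz computation on $\Psi_t R_t(z)e_i\,\f_{A_1}$, using the recursions that define the $R_k$ (equivalently, the coefficient-wise form $I^{(k)}_\be=\sum_{l\ge 0}R_l(-\d_\la)^{-l}I^{(k)}_{A_1}e_i$, which is how the paper itself uses the lemma in the proof of Lemma \ref{hrp}).

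The genuine gap is the uniqueness step. The joint system has an $N$-dimensional space of solutions of this shape --- the columns $\Psi_t R_t(z)e_j\,\f_{A_1}(u_j,\la;z)$, $j=1,\dots,N$ --- so two solutions differ by $N$ constants, not one, and matching the leading singular coefficient at $\la=u_i$ does not finish the argument: adding any combination of columns with $j\neq i$ changes the answer by something holomorphic near $u_i$ (its coefficients involve powers of $(\la-u_j)^{\pm 1/2}$ with $u_j\neq u_i$), which is invisible in the principal part. You flag the ``mixture of columns'' worry, but the Morse-lemma normalization you propose cannot by itself exclude it, and your closing appeal to the uniqueness of $R_t$ only makes the right-hand side well defined; it does not force agreement of higher coefficients. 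The missing ingredient is Picard--Lefschetz: the local monodromy around $\la=u_i$ sends $\be\mapsto-\be$, so each $I^{(k)}_\be$ is anti-invariant, while the monodromy acts on the $N$-dimensional solution space (identified with the vanishing homology via the injective period map $\al\mapsto I^{(k)}_\al$) with $-1$-eigenspace exactly $\C\,I^{(k)}_\be$; the right-hand side is also anti-invariant, since only half-integer powers of $(\la-u_i)$ occur. Hence the difference of the two sides is an anti-invariant solution whose leading coefficient vanishes, and since by your own Morse-lemma computation every nonzero anti-invariant solution has a nonvanishing coefficient of $(\la-u_i)^{1/2-k}$ in its $k$-th $z$-coefficient, the difference is zero (convergence of the half-integer expansions near $u_i$ is automatic from the regular-singularity theory of the $\la$-equation). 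With this monodromy argument inserted, your proof closes; without it, the identification of $\f_\be$ with the $i$-th column is unjustified.
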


\section{Symplectic loop space formalism}\label{sec4}

The goal of this section is to introduce Givental's quantization
formalism (see \cite{G2}) and use it to define the higher genus potentials in
singularity theory.

\subsection{Symplectic structure and quantization}\label{ssympl}

The space $\H:=H(\!(z^{-1})\!)$ of formal Laurent series in $z^{-1}$ with
coefficients in $H$ is equipped with the following \emph{symplectic form}: 
\ben
\Omega(\phi_1,\phi_2):={\rm Res}_z \(\phi_1(-z),\phi_2(z)\) \,,
\qquad \phi_1,\phi_2\in\H \,,
\een 
where, as before, $(,)$ denotes the residue pairing on $H$
and the formal residue ${\rm Res}_z$ gives the coefficient in front of $z^{-1}$.

Let $\{\d_i\}_{i=1}^{ N}$ and $\{\d^i\}_{i=1}^N$ be dual bases of $H$ with respect to the residue pairing.
Then
\ben
\Omega(\d^i(-z)^{-k-1}, \d_j z^l) = \de_{ij} \de_{kl} \,.
\een
Hence, a Darboux coordinate system is provided by the linear functions $q_k^i$, $p_{k,i}$ on $\H$ given by:
\ben
q_k^i = \Omega(\d^i(-z)^{-k-1}, \cdot) \,, \qquad
p_{k,i} = \Omega(\cdot, \d_i z^k) \,.
\een
In other words,
\ben
\phi(z) = \sum_{k=0}^\infty \sum_{i=1}^N q_k^i(\phi) \d_i z^k +  
\sum_{k=0}^\infty \sum_{i=1}^N p_{k,i}(\phi) \d^i(-z)^{-k-1} \,,
\qquad \phi\in\H \,.
\een  
The first of the above sums will be denoted $\phi(z)_+$ and the second $\phi(z)_-$.

The \emph{quantization} of linear functions on $\H$ is given by the rules:
\ben
\widehat q_k^i = \hbar^{-1/2} q_k^i \,, \qquad
\widehat p_{k,i} = \hbar^{1/2} \frac{\d}{\d q_k^i} \,.
\een
Here and further, $\hbar$ is a formal variable. We will denote by $\C_\hbar$ the field
$\C(\!(\hbar^{1/2})\!)$.

Every $\phi(z)\in\H$ gives rise to the linear function $\Omega(\phi,\cdot)$ on $\H$,
so we can define the quantization $\widehat\phi$. Explicitly,
\beq\label{phihat}
\widehat\phi = -\hbar^{1/2} \sum_{k=0}^\infty \sum_{i=1}^N q_k^i(\phi) \frac{\d}{\d q_k^i}
+ \hbar^{-1/2} \sum_{k=0}^\infty \sum_{i=1}^N p_{k,i}(\phi) q_k^i \,.
\eeq
The above formula makes sense also for $\phi(z)\in H[[z,z^{-1}]]$ if we interpret $\widehat\phi$
as a formal differential operator in the variables $q_k^i$ with coefficients in $\C_\hbar$.

\begin{lemma}\label{lphihat}
For all\/ $\phi_1,\phi_2\in\H$, we have\/ $[\widehat\phi_1, \widehat\phi_2] = \Omega(\phi_1,\phi_2)$.
\end{lemma}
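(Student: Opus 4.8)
The plan is to reduce the identity to the canonical commutation relations among the Darboux coordinates and then to match the resulting scalar against the symplectic form computed in those same coordinates. Since the quantization map $\phi\mapsto\widehat\phi$ is linear and both sides of the asserted identity are bilinear in $(\phi_1,\phi_2)$ --- the left-hand side through the commutator, the right-hand side through $\Omega$ --- it suffices to verify the equality when $\phi_1$ and $\phi_2$ each range over the two families of basis vectors $\d_i z^k$ and $\d^i(-z)^{-k-1}$ that span $\H$.

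First I would record the quantizations of these basis vectors directly from \eqref{phihat}. For $\phi=\d_i z^k$ one has $q_l^j(\phi)=\de_{ij}\de_{kl}$ and $p_{l,j}(\phi)=0$, hence $\widehat\phi=-\hbar^{1/2}\,\d/\d q_k^i$; for $\phi=\d^i(-z)^{-k-1}$ one has $q_l^j(\phi)=0$ and $p_{l,j}(\phi)=\de_{ij}\de_{kl}$, hence $\widehat\phi=\hbar^{-1/2}q_k^i$. Thus each $\widehat\phi$ is a first-order operator, a linear combination of the multiplication operators $q_k^i$ and the derivations $\d/\d q_k^i$, among which the only nontrivial relation is $[\d/\d q_k^i,\, q_l^j]=\de_{kl}\de_{ij}$.

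Next I would compute the four types of commutators. Two derivations commute, as do two multiplications, so the same-type pairs $(\d_iz^k,\d_jz^l)$ and $(\d^i(-z)^{-k-1},\d^j(-z)^{-l-1})$ give vanishing commutators, while the mixed pair gives $[\hbar^{-1/2}q_k^i,\,-\hbar^{1/2}\,\d/\d q_l^j]=\de_{kl}\de_{ij}$, with the opposite sign for the transposed pair. It then remains to check that $\Omega$ reproduces the same table. Expanding $\Omega(\phi_1,\phi_2)={\rm Res}_z(\phi_1(-z),\phi_2(z))$ on the basis vectors, only the mixed pairings survive the residue: $\Omega(\d^i(-z)^{-k-1},\d_jz^l)=\de_{ij}\de_{kl}$, whereas same-type pairings produce powers $z^{k+l}$ or $z^{-k-l-2}$ that carry no $z^{-1}$ coefficient. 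Matching the two tables term by term finishes the argument.

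The computation is essentially routine, so the points requiring care are bookkeeping rather than substance. The main thing to get right is the sign convention buried in the definition of $\widehat\phi$: one must remember that $\widehat\phi$ is the quantization of the linear function $\Omega(\phi,\cdot)$, and that rewriting $\Omega(\phi,\cdot)=-\sum_{k,i}q_k^i(\phi)\,p_{k,i}+\sum_{k,i}p_{k,i}(\phi)\,q_k^i$ uses the antisymmetry of $\Omega$ exactly once --- which is precisely what makes the mixed commutators come out with the matching signs. A secondary point is that, although a general $\phi\in\H=H(\!(z^{-1})\!)$ has a positive part that is only a polynomial in $z$ while its negative part may be an infinite series, the surviving contributions to $[\widehat\phi_1,\widehat\phi_2]$ and to $\Omega(\phi_1,\phi_2)$ are always finite sums, since each term pairs a negative-power coefficient of one argument against a positive-power coefficient of the other. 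Hence no convergence issue arises and the commutator is a well-defined element of $\C_\hbar$.
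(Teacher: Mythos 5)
Your proof is correct and takes essentially the same approach as the paper: the paper's proof likewise reduces by bilinearity to the basis vectors $\d^i(-z)^{-k-1}$ and $\d_i z^k$, where the identity holds by definition. Your write-up simply makes explicit the commutator and residue computations (and the finiteness of the surviving sums) that the paper leaves as a one-line check.
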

\begin{proof}
It is enough to check this for the basis vectors $\d^i(-z)^{-k-1}$, $\d_i z^k$, in which case it is true by definition.
\end{proof}
It is known that the operator series
$
\mathcal{R}_t(z):=\Psi_t R_t(z)\Psi_t^{-1}
$
is a symplectic transformation. Moreover, it has the form $e^{A(z)},$ where $A(z)$ is an infinitesimal symplectic transformation. 
A linear operator $A(z)$ on $\H:=H(\!(z^{-1})\!)$ is infinitesimal symplectic if and only if the map $\phi\in \H \mapsto A\phi\in \H$ is a Hamiltonian vector field with a Hamiltonian given by the quadratic function $h_A(\phi) = \frac{1}{2}\Omega(A\phi,\phi)$. 
By definition, the \emph{quantization} of $e^{A(z)}$ is given by the differential operator $e^{\widehat{h}_A},$ where the quadratic Hamiltonians are quantized according to the following rules:
\ben
(p_{k,i}p_{l,j})\sphat = \hbar\frac{\d^2}{\d q_k^i\d q_l^j} \,,\quad 
(p_{k,i}q_l^j)\sphat = (q_l^jp_{k,i})\sphat = q_l^j\frac{\d}{\d q_k^i} \,,\quad
(q_k^iq_l^j)\sphat = \frac1{\hbar} q_k^iq_l^j \, .
\een     

\subsection{The total ancestor potential }\label{asop}
Let us make the following convention. Given a vector 
\ben
\q(z) = \sum_{k=0}^\infty q_k z^k \in H[z] \,, \qquad
q_k=\sum_{i=1}^N q_k^i\d_i \in H \,,
\een
its coefficients give rise to a vector sequence  $q_0,q_1,\dots$.
By definition, a {\em formal  function} on $H[z]$,
defined in the formal neighborhood of a given point $c(z)\in H[z]$, is a formal power
series in $q_0-c_0, q_1-c_1,\dots$. Note that every operator acting on
$H[z]$ continuously in the appropriate formal sense induces an
operator acting on formal functions.

The \emph{Witten--Kontsevich tau-function} is the following generating series:
\beq\label{D:pt}
\D_{\rm pt}(\hbar;Q(z))=\exp\Big( \sum_{g,n}\frac{1}{n!}\hbar^{g-1}\int_{\overline{\M}_{g,n}}\prod_{i=1}^n (Q(\psi_i)+\psi_i)\Big),
\eeq
where $Q_0,Q_1,\ldots$ are formal variables, and $\psi_i$ ($1\leq
i\leq n$) are the first Chern classes of the cotangent line bundles on
$\overline{\M}_{g,n}$ (see \cite{W1,Ko1}).
It is interpreted as a formal
function of $Q(z)=\sum_{k=0}^\infty Q_k z^k\in \C[z]$, defined in the
formal neighborhood of $-z$. In other words, $\D_{\rm pt}$ is a formal power series
in $Q_0,Q_1+1,Q_2,Q_3,\dots$ with coefficients in $\C(\!(\hbar)\!)$.

Let $t\in B$ be a {\em semi-simple} point, so that the critical values
$u_i(t)$ ($1\leq i\leq N$) of $F(t,\cdot)$ form a coordinate
system. Recall also the flat coordinates
$\tau=(\tau^1(t),\dots,\tau^N(t))$ of $t$. The {\em total ancestor
  potential} of the singularity is defined as follows
\beq\label{ancestor}
\A_t(\hbar; \q(z)) = \widehat{\mathcal{R}}_t\ \prod_{i=1}^N\, \D_{\rm pt}(\hbar\Delta_i;\leftexp{i}{\q}(z)) \in \C_\hbar[[q_0,q_1+\one,q_2\dots]],
\eeq
where 
$
\mathcal{R}_t(z):=\Psi_t R_t(z)\Psi_t^{-1}
$
and 
\ben
\leftexp{i}{\q}(z) = \sum_{k=0}^\infty \sum_{j=1}^N q_k^j(\d_j u_i) z^k \,.
\een
\subsection{Proof of Theorem \ref{t1}}
Using Lemma \ref{vanishing_a1}, it is easy to see that the ratio
\ben
 (I^{(-m-1)}_{\beta_j}(t,\la),v_i) / (I^{(-1)}_{\beta_j}(t,\la),\one) 
\een
is analytic in a neighborhood of $\la=u_j$ for all $m\geq 0$. Furthermore, 
$
Y^{u_j}_{t,\la} \, \widehat{\mathcal{R}}_t  = \widehat{\mathcal{R}}_t \,
Y^{A_1}_{u_j},  
$
where $Y^{A_1}_{u_j}$ is the differential operator \eqref{vir-j} on
the variables
$Q_k^j:=\leftexp{j}{q_k}/\sqrt{\Delta_j}\ (k\geq 0)$, defined for the
$A_1$-singularity $F=x^2/2 + u_j$ (see Lemma 6.12 in \cite{BM}). Using
that the Virasoro operators in the Virasoro constraints for the point
coincide with the polar part of $Y^{A_1}_{u_j}$ (see Section 8.3 in
\cite{BM}), we get that $Y^{u_j}_{t,\la} \, \A_t(\hbar;\q)$ is a
formal series in $\q$ whose coefficients are analytic at
$\la=u_j$. This implies that the residue at $\la=u_j$ vanishes, which
completes the proof. 
\qed 

\subsection{The Virasoro recursions}
Let us compute the coefficient in front of $q_1^1$ in $L_{m-1,i}$. The
contribution from the $j$-th residue is computed as follows. We put
$\beta=\beta_j$ to avoid cumbersome notation. By
definition the differential operator $Y_{t,\la}^{u_j}$ is a sum of
quadratic expressions of the following 3 types:
\ben
(-1)^{k'+k''}\, (I^{(k'+1)}_\beta(t,\la),v^a)\,
(I^{(k''+1)}_\beta(t,\la),v^b)\, \hbar \d_{q_{k'}^a}\d_{q_{k''}^b}\, ,
\een
\beq\label{type-2}
2(-1)^{k''+1}\, (I^{(-k')}_\beta(t,\la),v_a)\,
(I^{(k''+1)}_\beta(t,\la),v^b)\, q_{k'}^a \d_{q_{k''}^b}\, ,
\eeq
and
\ben
(I^{(-k')}_\beta(t,\la),v_a)\,
(I^{(-k'')}_\beta(t,\la),v_b)\ \hbar^{-1} q_{k'}^a\,q_{k''}^b\, ,
\een
where the sum is over all $k',k''\geq 0$ and $a,b=1,2,...,N.$ The only
contribution could come from the terms \eqref{type-2}. The coefficient
in front of $q_1^1 \d_{q_k^b}$ is 
\beq\label{coef}
\frac{1}{2} \, (-1)^{k+1}\, {\rm Res}_{\la=u_j} \ (I^{(-m-1)}_\beta(t,\la),v_i)\, (I^{(k+1)}_\beta(t,\la),v^b).
\eeq
\begin{lemma}\label{hrp}
The following identity holds
\ben
\sum_{j=1}^N {\rm Res}_{\la=u_j} \ (I^{(k')}_\beta(t,\la),v_a)\,
(I^{(k'')}_\beta(t,\la),v^b) \, d\la=2(-1)^{k'}\delta_{a,b}\delta_{k'+k'',0},
\een
for all $k',k''\in \Z$ and $a,b=1,2,\dots,N.$
\end{lemma}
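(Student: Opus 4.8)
The plan is to localize the residues to the $A_1$-singularity by means of \leref{vanishing_a1} and then to absorb the $R$-matrix dressing using the symplecticity of $\mathcal{R}_t$. Since each residue ${\rm Res}_{\la=u_j}$ depends only on the germ of the integrand near $u_j$, and $\be_j$ vanishes precisely over $u_j$, I may substitute the local expansion $\f_{\be_j}(t,\la;z)=\Psi_t R_t(z)\,e_j\,\f_{A_1}(u_j,\la;z)$ of \leref{vanishing_a1}. The left-hand side of the lemma is the coefficient of $(-z)^{k'}(-w)^{k''}$ in the generating series
\begin{equation*}
\Phi_{a,b}(z,w):=\sum_{j=1}^N{\rm Res}_{\la=u_j}\ \big(\f_{\be_j}(t,\la;z),v_a\big)\,\big(\f_{\be_j}(t,\la;w),v^b\big)\,d\la ,
\end{equation*}
and the key structural observation is that the vector $\Psi_t R_t(z)e_j$ is independent of $\la$, so the residue acts only on the scalar $A_1$-factors.

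First I would carry out the $A_1$ computation. From the explicit formulas one has $I^{(k)}_{A_1}(u,\la)=c_k\,(\la-u)^{-k-1/2}$ uniformly in $k\in\Z$, so the product $I^{(k')}_{A_1}I^{(k'')}_{A_1}$ is the integer power $(\la-u)^{-k'-k''-1}$; hence ${\rm Res}_{\la=u}$ vanishes unless $k'+k''=0$, in which case $c_{k'}c_{-k'}=2(-1)^{k'}$. In generating form this reads
\begin{equation*}
G(z,w):={\rm Res}_{\la=u}\ \f_{A_1}(u,\la;z)\,\f_{A_1}(u,\la;w)\,d\la=2\sum_{k\in\Z}(-1)^k z^k w^{-k},
\end{equation*}
which is independent of $u$, hence of $j$. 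Pulling the $\la$-independent factor out of each residue therefore gives $\Phi_{a,b}(z,w)=G(z,w)\sum_{j}\big(\Psi_tR_t(z)e_j,v_a\big)\big(\Psi_tR_t(w)e_j,v^b\big)$.

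Next I would eliminate the $R$-dressing. Because $\Psi_t$ is an isometry from $\C^N$ with its standard bilinear form onto $(H,(\,,\,))$ and $\{e_j\}$ is orthonormal, writing $A=\Psi_t^{-1}v_a$ and $B=\Psi_t^{-1}v^b$ yields $\sum_j(\Psi_tR_t(z)e_j,v_a)(\Psi_tR_t(w)e_j,v^b)=\big(A,\,R_t(z)R_t^*(w)\,B\big)$, with $R_t^*$ the transpose. The symplecticity of $\mathcal{R}_t$ translates into $R_t(z)R_t^*(-z)=\mathrm{id}$; expanding $R_t(z)R_t^*(w)=\sum_{p,q\ge 0}M_{pq}z^pw^q$ this is the identity $\sum_{p+q=n}(-1)^qM_{pq}=\delta_{n,0}\,\mathrm{id}$. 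Multiplying by $G(z,w)$ and extracting the coefficient of $z^{k'}w^{k''}$, only the slice $n=k'+k''$ survives and the symplecticity relation collapses it to $2(-1)^{k'}\delta_{k'+k'',0}(A,B)$, while $(A,B)=(v_a,v^b)=\delta_{ab}$. Converting back to the $(-z)^{k'}(-w)^{k''}$-coefficient produces exactly $2(-1)^{k'}\delta_{ab}\delta_{k'+k'',0}$, which is the claim.

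The main obstacle is this final step: one must pair the formal two-variable ``delta'' series $G(z,w)$ against the genuine power series $R_t(z)R_t^*(w)$ and check that for each fixed $(k',k'')$ only finitely many monomials contribute (indeed $0\le p\le k'+k''$), so that the symplecticity relation can legitimately be applied coefficientwise. The sign bookkeeping---the interplay of the $(-z),(-w)$ versus $z,w$ expansions and the $(-1)^q$ in $\sum_{p+q=n}(-1)^qM_{pq}=\delta_{n,0}\,\mathrm{id}$---is where care is needed, but everything is arranged precisely so that the surviving constraint is the one that forces $k'+k''=0$.
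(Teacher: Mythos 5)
Your proof is correct and is essentially the paper's own argument: both localize each residue via \leref{vanishing_a1}, evaluate the $A_1$-residues explicitly (which forces the total index to vanish and produces the factor $2(-1)^{k'}$), sum over $j$ using orthonormality of $\{e_j\}$, and invoke the symplectic relation $R(z)\leftexp{T}{R}(-z)=1$ to kill every term except $l'=l''=0$. Your generating-series packaging, with the finiteness check $0\le p\le k'+k''$ justifying the pairing of the delta-like series $G(z,w)$ against $R_t(z)R_t^*(w)$, is a cosmetic reorganization of the paper's coefficientwise computation rather than a different route.
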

\proof
According to Lemma \ref{vanishing_a1} we have
\ben
I^{(k)}_\be(t,\la) = \sum_{l=0}^\infty R_l\, (-\d_\la)^{-l} \, I_{A_1}^{(k)}(u_j,\la)e_j.
\een
Using this identity we find that the $j$-th term in the above sum is 
\beq\label{contr-j}
\sum_{l',l''=0}^\infty (\leftexp{T}{R}_{l'}v_a,e_j)
(\leftexp{T}{R}_{l''}v^b,e_j)(-1)^{l'+l''} 
{\rm Res}_{\la=u_j}\,
I_{A_1}^{(k'+l')}(u_j,\la) I_{A_1}^{(k''+l'')}(u_j,\la)d\la.
\eeq
The above residue is non-zero only if $k'+l'=-k''-l''$. In the latter case using
integration by parts we find that the residue is
\ben
(-1)^{k'+l'} {\rm Res}_{\la=u_j}\,
I_{A_1}^{(0)}(u_j,\la) I_{A_1}^{(0)}(u_j,\la)d\la = 2(-1)^{k'+l'}.
\een
The sum \eqref{contr-j} becomes
\ben
2(-1)^{k'}\sum_{l',l''=0}^\infty (\leftexp{T}{R}_{l'}v_a,e_j)
(\leftexp{T}{R}_{l''}v^b,e_j)(-1)^{l''} \delta_{l'+l'',-k'-k''} 
\een
If we sum over all $j=1,2,\dots,N$, since $\{e_j\}$ is an orthonormal
basis of $H$, we get 
\ben
2(-1)^{k'}\sum_{l',l''=0}^\infty
(\leftexp{T}{R}_{l'}v_a,\leftexp{T}{R}_{l''}v^b)(-1)^{l''}
\delta_{l'+l'',-k'-k''} .
\een
Using the symplectic condition $R(z)\leftexp{T}{R}(-z)=1$ we see that
the only non-zero contribution in the above sum comes from the terms
with $l'=l''=0$, which completes the proof.
\qed  

The above Lemma implies that the coefficient \eqref{coef} is non-zero
only if $k=m$ and $b=i$ and in the latter case the coefficient is
1. In order to obtain a recursion relation for the correlators
\eqref{correlators} we replace $q_1^1=t_1^1-1$ and compare the
genus $g$ degree $n$ (with respect to $\t$) terms in the identity
\beq\label{recursion}
\sum_{m=0}^\infty
(-1)^{m+1}\,(I^{(m+1)}_{\beta}(t,\mu),v^a)L_{m-1,a}\, \A_t(\hbar;\q)=0.
\eeq
Note that if we ignore the dilaton shift, then $(Y_{t,\la}^{u_j} \A_t
d\la\cdot d\la)/\A_t$ (here $\cdot$ is the {\em symmetric} product of
differential forms) is a sum of terms of five different types. The
first two are
\beq\label{loop:stable}
\frac{\hbar^{g-1}}{n!}{\Big\langle} \phi^{\beta_j}_+(t,\la;\psi_1),\phi^{\beta_j}_+(t,\la;\psi_2),\t,\dots,\t{\Big\rangle}_{g-1,n+2},
\eeq
and
\beq\label{split:stable}
\sum_{\substack{ g'+g''=g\\ n'+n''=n }} \frac{\hbar^{g-1}}{ (n')! (n'')! } 
{\Big\langle} \phi^{\beta_j}_+(t,\la;\psi_1),\t,\dots,\t{\Big\rangle}_{g',n'+1}\ 
{\Big\langle} \phi^{\beta_j}_+(t,\la;\psi_1),\t,\dots,\t{\Big\rangle}_{g'',n''+1}.
\eeq
The other three types are
\beq\label{loop:unstable}
P_0^{\beta_j,\beta_j}(t,\la),
\eeq
\beq\label{split:unstable1}
\frac{2\hbar^{g-1}}{n!}\Omega(\phi^{\beta_j}_-(t,\la;z),\t(z))\, 
{\Big\langle} \phi^{\beta_j}_+(t,\la;\psi_1),\t,\dots,\t{\Big\rangle}_{g,n},
\eeq
and
\beq\label{split:unstable2}
\hbar^{-1}\Omega(\phi^{\beta_j}_-(t,\la;z),\t(z))\ \Omega(\phi^{\beta_j}_-(t,\la;z),\t(z)).
\eeq
Let us point out that the ancestor correlators \eqref{correlators} are
{\em tame} (see \cite{G3}), which by definition means that they 
vanish if $k_1+\cdots +k_n>3g-3+n$. In particular, the ancestor
potential does not have non-zero correlators in the genus-0 unstable range
$(g,n)=(0,0),(0,1),$ and $(0,2)$. However, motivated by the above formulas, it is convenient
to extend the definition in the unstable range as well by setting 
\ben
\begin{aligned}
{\Big\langle} \phi^{\beta_j}_+(t,\la;\psi_1),\t {\Big\rangle}_{0,2}
& := 
\Omega(\phi^{\beta_j}_-(t,\la;z),\t(z))\\
{\Big\langle} \phi^{\beta_j}_+(t,\la;\psi_1),
\phi^{\beta_j}_+(t,\la;\psi_1) {\Big\rangle}_{0,2}
& :=  P_0^{\beta_j,\beta_j}(t,\la)
\end{aligned}
\een
and keeping the remaining unstable genus-0 correlators $0$. If we allow
such unstable correlators; then the terms \eqref{split:unstable1} and
\eqref{split:unstable2} become the unstable part of the sum
\eqref{split:stable}, while \eqref{loop:unstable} becomes the unstable
correlator in the set \eqref{loop:stable}. 

The above discussion and the fact that the dilaton shift changes
$L_{m-1,a}$ simply by an additional differentiation $-\d/\d t_m^a$
yields the following identities:
\beqa\label{rec:1}
&&
{\Big\langle}
\phi^{\beta}_+(t,\mu;\psi_1),\t,\dots,\t{\Big\rangle}_{g,n+1} = \\
\label{rec:2}
&&
\frac{1}{4}\,\sum_{j=1}^N\, {\rm Res}_{\la=u_j}\,
\frac{ [\widehat{\phi^\beta}_+(t,\mu),\widehat{\f^{\beta_j}}_-(t,\la)] }
 { (I^{(-1)}_{\beta_j}(t,\la),\one)\,d\la } \times 
\left(
{\Big\langle}\phi^{\beta_j}_+(t,\la;\psi_1),\phi^{\beta_j}_+(t,\la;\psi_2),\t,\dots,\t{\Big\rangle}_{g-1,n+2}+
\phantom{\sum_{\substack{ g'+g''=g\\ n'+n''=n }} }\right.
\\
\label{rec:3}
&&
\left.
\sum_{\substack{ g'+g''=g\\ n'+n''=n }} {n\choose n'}\ 
{\Big\langle} \phi^{\beta_j}_+(t,\la;\psi_1),\t,\dots,\t{\Big\rangle}_{g',n'+1}\ 
{\Big\langle} \phi^{\beta_j}_+(t,\la;\psi_1),\t,\dots,\t{\Big\rangle}_{g'',n''+1}
\right),
\eeqa
where $(g,n)$ is assumed to be in the stable range, i.e., $2g-2+n>0$,
we are allowing unstable correlators on the RHS, and we suppressed the dependence
of the correlators on $t\in B_{ss}$. Note that the RHS involves
differential forms that should be treated formally with respect to the
symmetric product $\cdot$ of differential forms. 
\ben
\frac{d\mu\cdot d\la\cdot d\la }{d\la} = d\mu\cdot d\la = d\la\cdot d\mu.
\een  
The residue contracts $d\la$, so at the end the RHS involves only
$d\mu$. Let us point out that the tameness condition is
crucial, because it implies that for the correlator insertion of
the type $\sum_m I^{(m+1)}_\beta(t,\la)\,(-\psi)^m$, only finitely
many terms contribute. In particular, although the infinite sum of
differential operators in \eqref{recursion} does not make sense in
general, our argument goes through since on each step only finitely
many terms of the sum \eqref{recursion} contribute.

\section{The Eynard--Orantin recursion}\label{sec:EO}

The initial data for setting up the local Eynard--Orantin
recursion is a complex line $\C$ with $N$ marked points
$u_1,\dots,u_N$ and a certain set of 1- and 2-forms defined only
locally.  More precisely, for each $i$ we have a 
multi-valued holomorphic 1-form $\omega^i(\la)=P^i(\la)\,d\la$ defined in
some disk-neighborhood $D_i$  of $u_i$, s.t.,
\ben
P^i(\lambda) := \sum_{k=0}^\infty\  P_k^i\ (\la-u_i)^{k+1/2} \, .
\een 
For each pair $(i,j)$ we have a symmetric 2-form
$\omega^{ij}(\mu,\la):=P^{ij}(\mu,\la)\, d\la\cdot d\mu$ on $D_i\times
D_j$ obeying the symmetry $(i,\mu)\leftrightarrow (j,\la)$ and such
that the function
\ben
(\mu-u_i)^{1/2}(\la-u_j)^{1/2} P^{ij}(\mu,\la) = (\mu-u_i)^{1/2}(\la-u_j)^{1/2} P^{ji}(\la,\mu) 
\een
is analytic
on $D_i\times D_j$ except for a pole (with no residues) of order 2
along the diagonal in the case when $i=j$. In the latter case, we assume that the
differentials are normalized in such a way that the Laurent series
expansion with respect to $\mu$ in the annulus $0<|\la-\mu|<|\la-u_i|$
has the form 
\beq\label{ope}
P^{ii}(\mu,\la) = \frac{2}{(\mu-\la)^2} +
\sum_{k=0}^\infty\ P_k^{ii}(\la)\ (\mu-\la)^k.
\eeq
Note that for each fixed $k\geq 0$, the functions $P^{ii}_k(\la)$ are holomorphic on the punctured disk
$D_i^*:=D_i\setminus{\{u_i\}}$ with a finite order pole at $\la=u_i$. 

Let us denote by $\L_i$ the local system in a neighborhood of $D_i$
defined by the multi-valued function $(\la-u_i)^{1/2}$. We define a
system of symmetric multi-valued analytic differential forms
$\omega_{g,n}^{\alpha_{1},\dots,\alpha_{n}}(\la_1,\dots,\la_n)$ for
$\alpha_{k}\in \L_{i_k}$ and $\la_k\in D^*_{i_k}$ that are compatible
with the (local) monodromy action on the local systems, i.e., the
analytic continuation along a small loop around $\la_k=u_{i_k}$
transforms the differential form into
$\omega_{g,n}^{\alpha_{1},\dots,\sigma(\alpha_{k}),\dots,\alpha_{n}}(\la_1,\dots,\la_n)$,
where $\sigma$ is the corresponding monodromy action on $\L_{i_k}$. 

Let $\alpha\in \L_i,\beta\in \L_j$ be any sections; then the
base of the recursion is the  following
\ben
\begin{aligned}
\omega_{0,1}^{\alpha}(\la) &  = 0, \\
\omega_{0,2}^{\alpha,\beta}(\mu,\la)& =
\begin{cases}
P^{ij}(\mu,\la)\, d\mu\cdot d\la & \mbox{ if } (i,\mu)\neq (j,\la), \\ 
P^{ii}_{0}(\la) \, d\la\cdot d\la& \mbox{ if } (i,\mu)=(j,\la).
\end{cases}
\end{aligned}
\een
The {\em kernel} of the recursion is the following ratio of 1 forms:
\beq\label{kernel}
K^{\alpha,\beta}(\mu,\la) = \frac{1}{2}\, \frac{\oint_{C_\la}
  P^{ij}(\mu,\la')\,d\la' }{ P^j(\la) }\ \frac{d\mu}{d\la},
\eeq
where we fix $\mu\in D_i\setminus{\{u_i\}}$ and select a simple loop
$C_{\la}$ in $D_j$ based at $\la$ that goes around $u_j$. Then the 
recursion takes the form
\ben
\omega_{g,n+1}^{\alpha_0,\alpha_1,\dots,\alpha_n}(\la_0,\la_1,\dots,\la_n)
= \sum_{j=1}^N \, {\rm Res}_{\la=u_j}\,
K^{\alpha_0,\beta_j}(\la_0,\la)\times  \\
\left( 
\omega_{g-1,n+2}^{\beta_j,\beta_j,\alpha_1,\dots,\alpha_n}(\la,\la,\la_1,\dots,\la_n)
+
\sum_{\substack{g'+g''=g \\ I'\sqcup I''=\{1,\dots,n\}} }
\omega_{g',n'+1}^{\beta_j,\alpha_{I'} }(\la,\la_{I'}) \, \omega_{g'',n''+1}^{\beta_j,\alpha_{I''} }(\la,\la_{I''}) \right),
\een
where we are assuming that $2g-2+n>0$, $\beta_j\in \L_j$, the sum in the big
brackets is over all splittings, $n'$ and $n''$ are the number of
elements respectively in $I'$ and $I''$, and for a subset $I\subset
\{1,2,\dots,n\}$ we adopt the standard multi-index notation $x_I=(x_{i_1},\dots,x_{i_k})$.
Note that although the functions are multivalued, the local monodromy
about $\la=u_j$ leaves the expression invariant, so the residue is
well defined. 
 
\subsection{Proof of Theorem \ref{t2}}
In the settings of singularity theory, for a generic $t\in B_{ss}$ we
let the marked points be the critical values $u_i=u_i(t)$. The
choice of a section of the local system $\L_i$ is the same as choosing
a vanishing cycle over $\la=u_i$. Let
$\omega_{g,n}^{\beta_1,\dots,\beta_n}(\la_1,\dots,\la_n)$ be the
$n$-point series \eqref{n-point-series}. 

In order to prove that these forms satisfy the Eynard--Orantin
recursion, it is enough to notice that 
\ben
\frac{1}{\sqrt{\hbar}} \, [\widehat{\phi^{\beta}}_+(t,\la),\t(\psi)]= \phi^\beta_+(t,\la;\psi).
\een
Applying this formula $n$ times to \eqref{rec:1}--\eqref{rec:3} with
$\beta=\beta_1,\dots,\beta_n$ we obtain the Eynard--Orantin
recursion with 
\beq\label{kernel-1}
\omega^{ij}(\mu,\la) = [\widehat{\phi^\alpha}_+(t,\mu),\widehat{\phi^\beta}_-(t,\la)], 
\eeq
and 
\beq\label{kernel-2}
P^j(\la) = 4(I^{(-1)}_\beta(t,\la),\one),
\eeq
where $\alpha,\beta$ are vanishing cycles vanishing respectively over
$\mu=u_i$ and $\la=u_j$. Note that 
\ben
\oint_{C_\la} P^{ij}(\mu,\la') d\la'\cdot d\mu= 2\  [\widehat{\phi^\alpha}_+(t,\mu),\widehat{\f^\beta}_-(t,\la)]
\een
so the kernel is given by formula \eqref{kernel}.

In the opposite direction, in order to prove that the Eynard--Orantin
recursion implies the Virasoro constraints, it is enough to notice
that according to Lemma \ref{hrp} we have the following identity
\ben
\t(\psi) = \frac{1}{2}\, \sum_{j=1}^N \ {\rm
  Res}_{\la=u_j}\ \Omega(\f^{\beta_j}_-(t,\la;z), \t(z)) \,
\phi^{\beta_j}(t,\la;\psi)\ . 
\qed
\een
Finally, let us point out that using Lemma \ref{vanishing_a1} one can
express the Laurent series expansions of $\omega^{ij}(\mu,\la)$ and
$\omega^j(\la)$ in terms of the symplectic operator series
$\mathcal{R}.$ 
The answer is the following. Let $V_{kl}\in {\rm End}(H)$ be defined via
\ben
\sum_{k,l=0}^\infty V_{kl} w^k z^l = \frac{
  1-\leftexp{T}{\mathcal{R}}(-w)\mathcal{R}(-z) }{z+w} ,
\een
then $(\mu-u_i)^{1/2}(\la-u_j)^{1/2}\, P^{ij} (\mu,\la)$ has the
following Taylor's series expansion 
\ben
\frac{\delta_{ij} }{(\mu-\la)^2} \, (\mu-u_i+\la-u_j) +
\sum_{k,l=0}^\infty
2^{k+l+1}(e_i,V_{kl}e_j)\frac{(\mu-u_i)^k}{(2k-1)!!}\, \frac{(\la-u_j)^l}{(2l-1)!!}
\een
 Note that if $i=j$ and  we fix $\la$ near $u_i$; then the Laurent series
 expansion of $P^{ij}(\mu,\la)$ about $\mu=\la$ does take the form
 \eqref{ope}.
The Taylor's series expansion of $P^j(\la)$ is
\ben
8\, \sum_{k=0}^\infty \frac{ (-1)^k 2^{k+1/2} }{ (2k+1)!! }\,
(\mathcal{R}_k\,e_j,\one)\, (\la-u_j)^{k+1/2} .
\een
Up to an appropriate normalization of the correlation functions, our
answer agrees with the formulas in \cite{BOSS}.


\begin{thebibliography}{JKV2}

\bibitem{AGV}{V.~Arnold, S.~Gusein-Zade, and A.~Varchenko.}
{\em Singularities of Differentiable maps.} Vol. II. Monodromy and
  Asymptotics of Integrals. Boston, MA: Birkh\"auser Boston,
  1988. viii+492 pp

\bibitem{BM}{B.~Bakalov and T.~Milanov}
\emph{W-constraints for the total descendant potential of a simple singularity.}
Preprint (2012);
http:/\!/arxiv.org/abs/1203.3414

\bibitem{BE}{V.~Bouchard and B.~Eynard}
\emph{Think globally, compute locally.}
Preprint (2012);
http:/\!/arxiv.org/abs/1211.2302


\bibitem{BOSS}{P.~Dunnin--Barkowski, N.~Orantin, S.~Shadrin, and  L.~Spitz.}
{\em Identification of the Givental formula with the spectral curve topological recursion procedure.} 
Preprint (2012);
http:/\!/arxiv.org/abs/1211.4021

\bibitem{EO}{B.~Eynard and N.~Orantin}
\emph{Invariants of algebraic curves and topological expansion.} 
Comm. in Number Theory and Physics \textbf{1}(2007): 347--552

\bibitem{Du}
B.~Dubrovin,
\textit{Geometry of 2D topological field theories}. 
In: ``Integrable systems and quantum groups'' 
(Montecatini Terme, 1993), 120--348, Lecture Notes
in Math., 1620, Springer, Berlin, 1996.

\bibitem{DMSS}{O.~Dumitrescu, M.~Mulase, B.~Safnuk, and A.~Sorkin.}
\emph{
The spectral curve of the Eynard-Orantin recursion via the Laplace transform.}
Preprint (2012);
http:/\!/arxiv.org/abs/1202.1159

\bibitem{G1} A.~Givental.
{\em Semisimple Frobenius structures at higher genus.}
Internat. Math. Res. Notices 2001, no. 23, 1265-1286

\bibitem{G2}
A.~Givental.
\textit{Gromov--Witten invariants and quantization of quadratic Hamiltonians}. 
Mosc. Math. J. \textbf{1} (2001), 551--568

\bibitem{G3}{A.~Givental.}
\emph{$A_{n-1}$ singularities and $n$KdV Hierarchies.} Mosc. Math. J. 3.2(2003): 475--505

\bibitem{He}{C.~Hertling.}
\emph{Frobenius Manifolds and Moduli Spaces for Singularities.}
Cambridge Tracts in Mathematics, 151. Cambridge University Press, Cambridge, 2002. x+270 pp

\bibitem{Ko1}
M.~Kontsevich, 
\textit{Intersection theory on the moduli space of curves and the matrix Airy function}. 
Comm. Math. Phys. \textbf{147} (1992), 1--23.

\bibitem{S1}{K.~Saito.}
{\em On Periods of Primitive Integrals, I.}
Preprint RIMS(1982)

\bibitem{SaT}{K.~Saito and A.~Takahashi.}
{\em From primitive forms to Frobenius manifolds.}
From Hodge theory to integrability and TQFT tt*-geometry, 31-48, Proc. Sympos. Pure Math., 78, Amer. Math. Soc., Providence, RI, 2008

\bibitem{MS} 
M.~Saito, 
\textit{On the structure of Brieskorn lattice}. 
Ann. Inst. Fourier \textbf{39} (1989), 27--72. 

\bibitem{W1}
E.~Witten,
\textit{Two-dimensional gravity and intersection theory on moduli space}. 
In: ``Surveys in differential geometry,'' 243--310, 
Lehigh Univ., Bethlehem, PA, 1991. 

\end{thebibliography}
\end{document}